\newtheorem{theorem}{Theorem}[section]{\bf}{\it}
\newtheorem{lemma}[theorem]{Lemma}{\bf}{\it}
{\bf}{\it}
{\bf}{\it}
{\bf}{\it} 
{\bf}{\it}
\newtheorem*{theorem*}{Theorem}
\newtheorem*{namedtheorem}{\theoremname}
\newcommand{\theoremname}{testing}
\theoremstyle{remark}
\newtheorem*{remark}{Remark}
\newtheorem{definition}[theorem]{Definition}
\theoremstyle{definition}
\theoremstyle{remark}
\numberwithin{equation}{section}
\newcommand{\R}{\mathbb R}
\newcommand{\Z}{\mathbb Z}
\newcommand{\N}{\mathbb N}
\newcommand{\dist}{{\operatorname{dist}\,}}
\newcommand{\id}{{\operatorname{id}}}
\newdimen\vintkern\vintkern11pt
\def\vint{-\kern-\vintkern\int}
\newcommand{\dx}{\;\mathrm{d}x}
\newcommand{\bS}{\mathbb{S}}
\newcommand{\vol}{\mathrm{vol}}
\newcommand{\interior}{\mathrm{int}\;}
\newcommand{\Lip}{\mathrm{Lip}}
\newcommand{\cB}{\mathcal{B}}
\newcommand{\cD}{\mathcal{D}}
\newcommand{\cI}{\mathcal{I}}
\newcommand{\cN}{\mathcal{N}}
\newcommand{\sA}{\mathsf{A}}
\newcommand{\sB}{\mathsf{B}}
\newcommand{\sL}{\mathsf{L}}
\newcommand{\sM}{\mathsf{M}}
\newcommand{\sT}{\mathsf{T}}
\newcommand{\sC}{\mathsf{C}}
\newcommand{\Corner}{\mathrm{Corner}}
\newcommand{\Terminal}{\mathrm{Terminal}}
\newcommand{\inner}{\mathrm{inner}}
\renewcommand{\emptyset}{\varnothing}
\begin{document}

\title[Sobolev regular wild involution of the $3$-sphere]{Bing meets Sobolev}

\author{Jani Onninen}
\address{Department of Mathematics, Syracuse University, Syracuse,
NY 13244, USA \and  Department of Mathematics and Statistics, P.O.Box 35 (MaD) FI-40014 University of Jyv\"askyl\"a, Finland}
\email{jkonnine@syr.edu}

\author{Pekka Pankka}
\address{Department of Mathematics and Statistics, P.O. Box 68 (Gustaf H\"allstr\"omin katu 2b), FI-00014 University of Helsinki, Finland}
\email{pekka.pankka@helsinki.fi}

\thanks{This work was supported in part by the Academy of Finland project \#297258 and the NSF grant  DMS-1700274.}
\subjclass[2010]{Primary  57S25; Secondary 57R12, 57N45, 46E35, 30C65}
\keywords{Fixed point set, Sobolev homeomorphism, wild involution}

\date{\today}

\dedicatory{Dedicated to Pekka Koskela on the occasion of his 59th birthday.}

\begin{abstract}
We show that, for each $1\le p < 2$, there exists a wild involution $\bS^3\to \bS^3$ in the Sobolev class $W^{1,p}(\bS^3,\bS^3)$. 
\end{abstract}

\maketitle

\setcounter{tocdepth}{3}

\section{Introduction}

A special case of a classical result of P.A.\;Smith \cite{Smith} states that the fixed point set of an involution $\bS^3\to \bS^3$ is homeomorphic to either $\bS^1$ or $\bS^2$. More precisely, the fixed point set is homeomorphic to $\bS^2$ for orientation-reversing involutions and homeomorphic to $\bS^1$ for orientation-preserving (non-identity) involutions. Recall that a homeomorphism $f\colon \bS^3\to \bS^3$ is an \emph{involution} if $f\circ f = \id$,  that is, $f^{-1} = f$. We call orientation-reversing involutions \emph{reflections}.

For a $C^1$-involution, the fixed point set is a smooth submanifold by a similarly classical result of Bochner \cite{Bochner}. Topological involutions, on the other hand, exhibit much wilder behavior. In a celebrated paper \cite{Bing} Bing constructed an example of an orientation-reversing wild involution of $\bS^3$ which has a wildly embedded $2$-sphere as its fixed point set. Montgomery and Zippin \cite{Montgomery-Zippin} modified Bing's construction to obtain an orientation-preserving wild involution of $\bS^3$ having a wildly embedded circle as its fixed point set. 
Recall that an embedded $k$-sphere $S\subset \bS^3$ is \emph{tamely embedded} (or \emph{tame} for short) if there is a homeomorphism $h \colon \bS^3 \to \bS^3$ for which $h(S) = \bS^k \subset \bS^3$, and \emph{wildly embedded} otherwise.

In what follows, we say than an involution $\bS^3\to \bS^3$ is \emph{tame} (resp.\;\emph{wild}) if its fixed point set is tame (resp.\;wild). In this terminology $C^1$-involutions of $\bS^3$ are tame. Whereas this result in the orientation-reversing case is a straight forward consequence of the collarability of smooth $2$-spheres in $\bS^3$ and the generalized Schoenflies theorem~\cite{BrownSc}, the case of orientation-preserving involutions is a part of the Smith conjecture, proved first by Waldhausen~\cite{Waldhausen} 
in the special case of diffeomorphisms of order $2$ and Morgan and Bass \cite{MB1984} in the general case. Recall that in two dimensions involutions of $\bS^2$ are tame; see Brouwer \cite{Brouwer}.

{It is clear from the setting that Bochner's result on the smoothness of the fixed point set does not extend below $C^1$-smoothness.}
Indeed, it suffices to conjugate $\lambda \colon (x_1,x_2,x_3) \mapsto (-x_1,x_2,x_3)$ by a non-smooth bilipschitz homeomorphism $h \colon \R^3 \to \R^3$, $(x_1,x_2,x_3) \mapsto (x_1+|x_2|, x_2, x_3)$, to obtain an involution $f = h \circ \lambda \circ h^{-1}$ of $\R^3$ for which the fixed point set $h(\{0\} \times \R^2)$ is not a smooth submanifold of $\R^3$. Conjugation of $f$ by the stereographic projection and extending the obtain homeomorphism to $e_4\in \bS^3$ yields now an example of desired type.

{The question, whether the fixed point set is wildly embedded under weaker assumptions on smoothness, has no such easy answer.}
In \cite{Heinonen-Semmes} Heinonen and Semmes asked (Question 26) whether there exists a wild quasiconformal reflection of $\bS^3$; the answer to this question is open even in the case of bilipschitz involutions of $\bS^3$.\footnote{It is announced in \cite{Hamilton} that quasiconformal reflections are tame.} 

In this article we consider the Sobolev regularity of wild involutions related to  Bing's construction~\cite{Bing}. Bing's construction has been used to obtain several others wild constructions in quasiconformal geometry, most notably by Freedman and Skora \cite{Freedman-Skora} for wild quasiconformal actions and by Semmes \cite{Semmes} for quasisymmetric non-parametrization theorems; see also Heinonen-Wu \cite{Heinonen-Wu} and \cite{PV,PW} for similar results in dimensions $n>3$. However, to our knowledge the regularity of wild involutions have not been considered in the literature apart from the modulus of continuity; see Bing \cite{Bing1988}.   Our main result reads as follows.

\begin{theorem}
\label{thm:main}
For $p\in [1,2)$ 
there exist an orientation-preserving and an orientation-reserving wild involution of $\bS^3$ in the Sobolev space $W^{1,p}(\bS^3,\bS^3)$.
\end{theorem}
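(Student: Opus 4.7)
The plan is to realize Bing's wild involution \cite{Bing} (and its orientation-preserving variant of Montgomery and Zippin \cite{Montgomery-Zippin}) as the uniform limit of an explicit sequence of PL bilipschitz involutions $f_n \colon \bS^3 \to \bS^3$, and to choose the geometric parameters in the nested Bing construction carefully enough that the derivatives $Df_n$ form a Cauchy sequence in $L^p$ for any fixed $p \in [1,2)$. The wildness of $\Fix(f)$ and the involutive property of the limit are topological consequences of Bing's combinatorial scheme and are inherited verbatim, regardless of the scales chosen; the novelty is the quantitative control.

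More concretely, I would start with $f_0$ equal to the standard reflection across the equator (respectively, $\pi$-rotation about the axis $\bS^1 \subset \bS^3$), and set $f_n = f_{n-1} \circ T_n$, where $T_n$ is a composition of disjointly supported bilipschitz involutions, each modeled on a local swap of a linked pair of small handlebodies $(H, \sigma(H))$ of diameter $r_n$. Each local model, obtained by rescaling a fixed universal one, has Lipschitz constant bounded by a scale-independent $L_0$ and is the identity outside a prescribed collar of $H\cup \sigma(H)$. Symmetrizing the twists with respect to the ambient symmetry (applying $\tau$ on one handle and $\sigma \tau \sigma$ on the mirror handle) guarantees that each $f_n$ is genuinely an involution, and that the symmetry persists to the $C^0$-limit $f$.

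For the Sobolev bound, I would stratify $\bS^3$ by sets $A_k = S_k \setminus S_{k+1}$, where $S_k$ is the union of the level-$k$ collar regions. On $A_k$ the map $f_n$ is affected by at most $k$ nested twists, so $|Df_n| \leq L_0^k$ pointwise, while $\vol(S_k) \leq N_k r_k^3$ in terms of the combinatorial count $N_k$ of level-$k$ pairs. Therefore
\[
\int_{\bS^3} |Df_n|^p \leq \vol(\bS^3) + \sum_{k=1}^n L_0^{kp} \, N_k \, r_k^3,
\]
and the differences $\|Df_n - Df_{n-1}\|_{L^p}^p$ are controlled by the tail of the same series. Since the scales $r_k$ may be chosen arbitrarily small within each parent handlebody (the topology of the limit depends only on the combinatorial nesting, not on the metric scales), one can prescribe $r_k$ to decay fast enough that the series converges for any specified $p<2$. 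Consequently $\{f_n\}$ is Cauchy in $W^{1,p}$ and $f\in W^{1,p}(\bS^3,\bS^3)$.

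The main technical obstacle, I expect, is the explicit construction of the local bilipschitz model: a genuine involution of a standard neighborhood of a linked handlebody pair that exchanges the two handles and is the identity on an exterior collar, with Lipschitz constant uniformly independent of scale. A natural ansatz is a $\pi$-rotation about the axis equidistant from the two handles, spliced to the identity by a radial bilipschitz interpolation through the collar. Verifying that this model can be embedded in the Bing nesting scheme with enough room for further levels, while maintaining the uniform Lipschitz bound and the required symmetry with the ambient involution of the previous stage, is where the quantitative weight of the argument will lie.
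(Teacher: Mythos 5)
Your high-level strategy---realize Bing's involution as a limit of bilipschitz approximants whose derivatives you control in $L^p$ by choosing the scales $r_k$ of the defining sequence small enough---is the correct one, and is what the paper does. However, the central estimate $|Df_n| \le L_0^k$ on $A_k$ with a \emph{scale-independent} constant $L_0$ for the local model is wrong, and the error is exactly what hides the $p<2$ threshold. The stage-$k$ tori in Bing's construction are not isotropic rescalings of a fixed model: they have length bounded below (they must wind around inside the parent torus) but thickness $\sim r_k$. Consequently, any homeomorphism supported in one of these tori that realizes the required twist or swap must move points by a distance comparable to the length of the torus while being the identity across a collar of thickness $\sim r_k$; its Lipschitz constant is therefore of order $1/r_k$, not a universal $L_0$. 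This is precisely the content of the paper's Lemma~\ref{lemmatarkea}, where the bilipschitz constant of the conjugated involution is $L_\Diamond\,\tfrac{\sigma(\sL)}{\sigma(\sL')}\,\#\sL \sim 1/r_k$. A quick sanity check exposes the problem: with your bound $L_0^{kp}\,N_k\,r_k^3$, choosing $r_k$ small enough makes the series converge for \emph{every} $p$, which would contradict the explicit $p<2$ restriction in the theorem. The true balance is $\int_{X_k\setminus X_{k+1}}|Df|^p \sim (1/r_k)^p\cdot 2^k r_k^2 = 2^k r_k^{2-p}$ (note also that the measure of the level-$k$ tori is $\sim 2^k r_k^2$, not $N_k r_k^3$), and this is summable with a suitable choice of $r_k$ only when $2-p>0$.

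There is a second, more structural, gap: with the correct local bound $\sim 1/r_k$, the naive chain-rule estimate on the composition of $k$ nested modifications gives $\prod_{j\le k}(1/r_j)$, which diverges. The paper avoids this by a cancellation mechanism you do not address. The twist maps $\theta_j$ are constructed so that, on the inner torus $|\sL'_j|$ (where all deeper stages live), they restrict to locally isometric realizations of a rotation away from corners (Lemma~\ref{lemma:inner_bilip}(2)--(3)); only at the cubical corners do they distort, by a universal factor $L_\inner$. Hence the outer composition $\psi_{k-1}=\theta_1\circ\cdots\circ\theta_{k-1}$ has local Lipschitz constant $L_\inner^{\eta(x)}$, where $\eta(x)$ counts corner passages, and the $1/r_k$ blow-up enters only through the innermost factor $\theta_k\circ R\circ\theta_k^{-1}$. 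Without this isometry-away-from-corners structure, the composition estimate collapses. Finally, a minor but non-trivial point: in Bing's and the paper's constructions the ambient involution $R$ fixes each torus $\sL_w$ setwise (the fixed plane passes through the tori, which is what forces $\phi(X)$ into the fixed sphere and produces wildness); your picture of $R$ \emph{swapping} the pair $(H,\sigma(H))$ at each level is a genuinely different configuration, and the claim that wildness is ``inherited verbatim'' from Bing's combinatorics would need independent justification under that modification.
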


{As mentioned,} 
our constructions are Sobolev space versions of the constructions of Bing and Montgomery--Zippin. The wild involution $f \colon \bS^3 \to \bS^3$ is a quotient of a linear involution $\iota \colon \bS^3\to \bS^3$  
\[
\xymatrix{\bS^3 \ar[r]^\iota \ar[d]_\phi & \bS^3 \ar[d]^\phi  \\ \bS^3 \ar[r]^f & \bS^3 }
\]
in a monotone map $\phi \colon \bS^3 \to \bS^3$ associated to the \emph{Bing's double} in \cite{Bing}. In the orientation-reversing case, the isometric involution $\iota$ is $(x_1,x_2,x_3,x_4) \mapsto (-x_1,x_2,x_3,x_4)$ and the fixed point set is a wild $2$-sphere. In the orientation-preserving case $\iota$ is the involution $(x_1,x_2,x_3,x_4) \mapsto (-x_1,-x_2, x_3,x_4)$ and the fixed point set is a wild $1$-sphere.


To obtain a Sobolev regular wild involution, we consider a modified version of the defining sequence for Bing's double. We show, in spirit of \cite{DP}, that there is a defining sequence, consisting of solid $3$-tori, which are uniformly bilipschitz to certain model cubical $3$-tori in their inner metric. This allows us to compute the derivative of $f$ from the twist maps in Bing's shrinking process \cite{Bing1988} in the complement of  the wild Cantor set obtained as the image of Bing's double in the map $\phi$.
For given $p\in [1,2)$, by choosing the tori with sufficiently small volumes at each stage of the defining sequence, we obtain the $L^p$-integrability of $Df$. 

Heuristically, the exponent $p=2$ is associated to the balance between volume of the solid tori and the local bilipschitz constant of twist maps rotating the interiors of these tori. 
We do not know whether we may reach the $W^{1,2}$-Sobolev regularity for $f$ using our method. However, we obtain {the integrability of} the adjoint $D^\# f$ of $Df$. 
{Namely,} for the wild involution $f$ it follows from the change of variables formula that
\[
\int_{\bS^3} |Df(x)|^p\;\mathrm{d}x = \int_{\bS^3} \frac{|D^\# f(x)|^p}{J_f(x)^{p-1}}\;\mathrm{d}y
\]
for $p\in [1,2)$, where $J_f$ is the Jacobian determinant of the differential $Df$.  
In  spirit of the question of Heinonen and Semmes, it would be interesting to know whether there exist wild involutions in $W^{1,p}(\bS^3,\bS^3)$ for $p\ge 2$.

During the course of the proof of Theorem \ref{thm:main}, we obtain that the monotone map $\phi \colon \bS^3 \to \bS^3$ is in the same Sobolev space $W^{1,p}$ as the involution $f\colon \bS^3\to \bS^3$. It should, however, be noted that  the Sobolev regularity of $f$ does not follow immediately from the Sobolev regularity of $\phi$  since \emph{a priori} the composition $\phi\circ \iota \circ \phi^{-1}$, in the complement of the singular set, is not in $W^{1,1}$. An additional cancellation property of $f$ is needed to prove the seeked regularity.


\subsection{Connection to nonlinear elasticity}

Our original interest to investigate the Sobolev regularity of homeomorphisms comes from the theory of nonlinear elasticity -- in particularly the Ball-Evans approximation  problem~\cite{Ba}.  It asks if a $W^{1,p}$-Sobolev homeomorphism can be approximated in the strong topology of $W^{1,p}$ by piecewise affine invertible mappings. J. M. Ball attributes this question to L. C. Evans and points out its relevance to the  regularity of minimizers of neohookean energy functionals.  In the context of nonlinear elasticity~\cite{Anb, Bac, Cib}, one typically deals with two or three dimensional models. The Ball-Evans problem is completely understood in the planar case~\cite{HP, IKOapprox} and wildly open in dimension three. 


It is worth noting that an important result in the Ball--Evans problem in dimension $n=4$ is due to Hencl and Vejnar~\cite{HV} for $W^{1,1}$-homeomorphisms and in dimensions $n \ge 4$ by Campbell, Hencl, and Tengvall~\cite{CHT}: \emph{for each $1\le p < [n/2]$, there is a $W^{1,p}$-Sobolev homeomorphism, which cannot be approximated by piecewise affine homeomorphisms}.
Here $[a]$ denotes the integer part of $a$.

The connection between wild involutions and the Ball--Evans problem is via cellular mappings. We finish this introduction by discussing this connection between geometric topology and nonlinear elasticity.

To build a viable theory of minimization problems for three or higher dimensional models, we come to the question on enlargement of the class Sobolev homeomorphisms. Clearly, enlarging the set of the admissible mappings may change the nature of the energy-minimal solutions. In two dimensions, the classical Youngs approximation theorem~\cite{Youngs} states that a  continuous map between $2$-spheres is monotone if and only if it is a uniform limit of homeomorphisms. Using a Sobolev variant of Youngs approximation theorem~\cite{IOmono}, we may enlarge the minimization to monotone mappings and to avoid the Lavrentiev phenomenon. 

 
In three dimensions it is possible to construct monotone mappings of the $3$-sphere onto itself which cannot be uniformly approximated by homeomorphisms~\cite{BingDeomposition}. However, the approximation  is possible for {\it cellular} mappings; mappings whose inverse image of a point is an intersection of a decreasing sequence of
$n$-cells, the notion introduced by Brown~\cite{Br}. The monotone map $\phi$ obtained in the construction of the Bing's double is an example of a cellular mapping. 

Armentrout showed~\cite{Ar} that cellular mappings of an $3$-manifold onto itself can be approximated by homeomorphisms; see also Siebenmann~\cite{Si}. We refer to a book of Daverman \cite{Daverman-book} for the development of these mappings as a part of the theory of decomposition spaces and manifold recognition problems.

A Sobolev variant of the result of Armentrout would convince us that the Lavrentiev phenomenon in three dimensional minimization problems can be avoided by adopting  Sobolev cellular mappings. Nevertheless the theory of Sobolev cellular mappings is still in its infancy. 


\bigskip
\noindent 
{\bf Acknowledgements} Authors thank Piotr Haj\l asz~\cite{sobo_meets_poin} and Pekka Koskela~\cite{sobo_meets_poin} for inspiration {regarding the title.} 


\section{Cubical preliminaries}

Let $\cD$ be the collection of dyadic cubes in $\R^3$ of side length at most $1$; that is, cubes $Q = 2^{-k}(v + [0,1]^3)$, where $v\in \Z^3$ and $k\in \Z_+$. Given a subcollection $\sC\subset \cD$ , we denote their union $|\sf C| = \bigcup \sf C$. We call the number of cubes $\#\sC$ in $\sC$ the \emph{cubical length of $\sC$}.

We say that cubes $Q$ and $Q'$ in $\cD$ are \emph{adjacent} (denoted $Q \sim Q'$) if $Q\cap Q'$ is a common face of both cubes. In particular, adjacent cubes have the same side length. We also say that cubes $Q$ and $Q'$ \emph{meet} if $Q\cap Q'\ne \emptyset$.

\begin{figure}[h!]
\begin{overpic}[scale=0.2,unit=1mm]{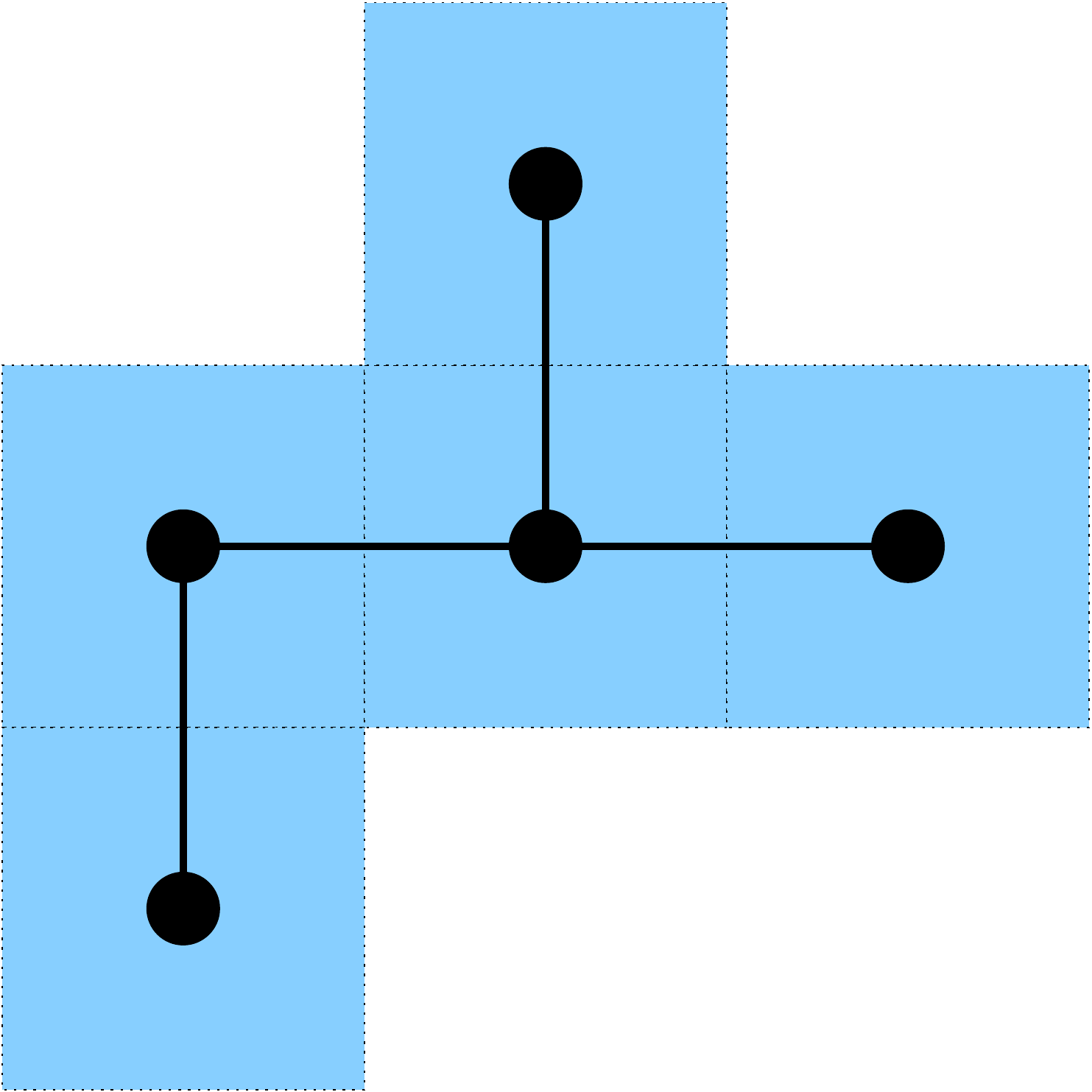} 
\end{overpic}
\caption{A collection $\sf C$ of five adjacent cubes and their adjacency graph $\Gamma(\sf C)$.}
\label{fig:Adjacent_cubes}
\end{figure}

Given a collection $\sC$ of cubes $\cD$ of the same side length, the adjacency graph $\Gamma(\sC)$ is the graph having cubes $Q$ in $\sC$ as vertices and pairs $\{Q,Q'\}$ of adjacent cubes $Q$ and $Q'$ in $\sC$ as edges; see Figure \ref{fig:Adjacent_cubes} for an example of an adjacency graph. For each $Q\in \sC$, we denote $\cN_\sC(Q)$ the collection of all cubes in $\sC$ which intersect $Q$. We call $\cN_\sC(Q)$ the cubical neighborhood of $Q$ in $\sC$. Note that $\cN_\sC(Q)$ contains all cubes adjacent to $Q$, but may contain also other cubes. We also denote $\sigma(\sC)$ the common side length of the cubes in $\sC$.


\subsection{Cubical loops and arcs}
In what follows, we consider mainly cubical arcs and loops, which are collactions of cubes $\sC$ for which the graph $\Gamma(\sC)$ has valence at most two. For the definitions of cubical loops and arcs, we distinguish first three special classes of cubes. 
\begin{definition}
A cube $Q\in \sC$ is an \emph{$I$-cube} if $Q$ is adjacent to exactly two cubes $Q_+$ and $Q_-$ in $\sC$ and $Q_+ \cap Q_-=\emptyset$.
\end{definition}
Here the heuristic idea is that the union $Q_+ \cup Q \cup Q_-$ is an image of $[0,1]^2\times [0,3]$ under a similarity map.

\begin{definition}
A cube $Q\in \sC$ is a \emph{corner of $\sC$} if $Q$ is (again) adjacent to exactly two cubes $Q_+$ and $Q_-$ in $\sC$ and $Q_+ \cap Q_-\ne \emptyset$. 
\end{definition}
Now the heuristic idea is that the union $Q_+\cup Q \cup Q_-$ is an image of the union of $(e_1+[0,1]^3) \cup [0,1]^3 \cup (e_2+[0,1]^3)$ under a similarity map. For illustrations of an $I$-cube and a corner, see Figure \ref{fig:Corner22}. We denote $\cI(\sC)$ and $\Corner(\sC)$ the collections of $I$-cubes and corners of $\sC$, respectively.

Finally, we define terminal cubes of a collection.
\begin{definition}
A cube $Q\in \sC$ is a \emph{terminal cube of $\sC$} if $Q$ is a leaf in $\Gamma(\sC)$, that is, $Q$ is adjacent to exactly one cube in $\sC$.
\end{definition}

\begin{figure}[h!]
\begin{overpic}[scale=0.2,unit=1mm]{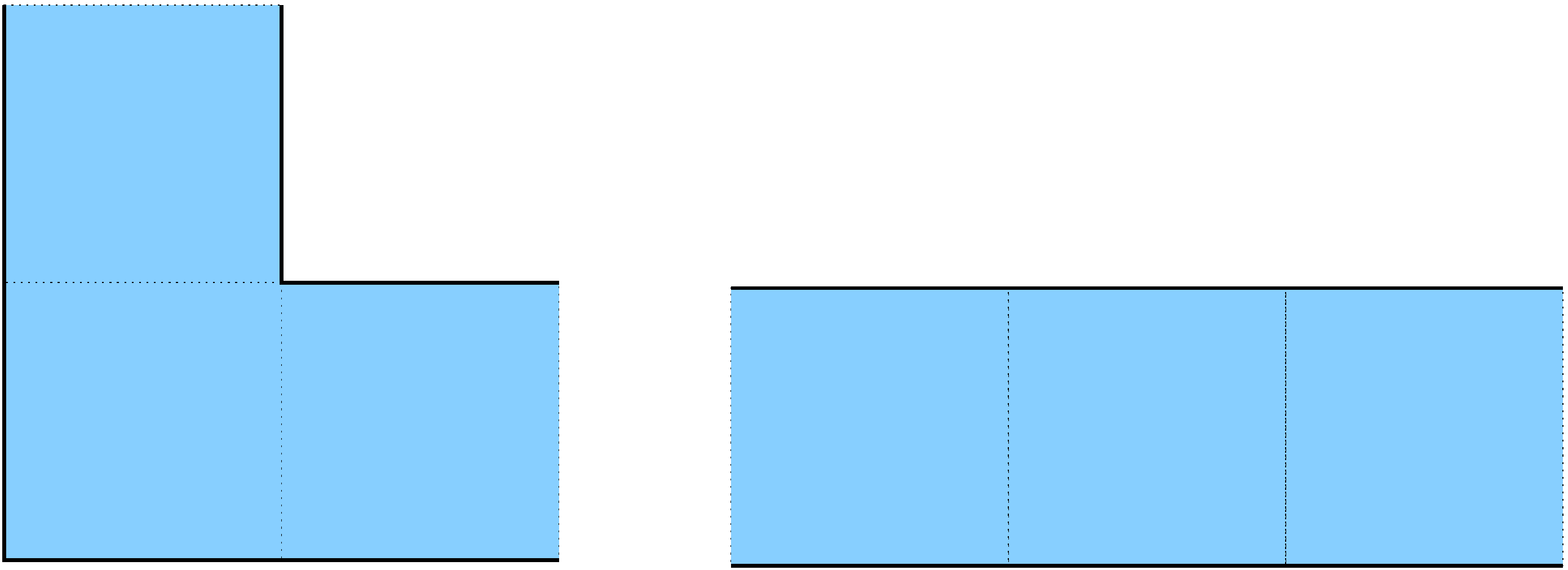} 
\end{overpic}
\caption{A corner and a segment.}
\label{fig:Corner22}
\end{figure}

Having these three definitions at our disposal, we may define cubical loops and cubical arcs.
\begin{definition}
A finite collection $\sL$ is a \emph{cubical loop} if the cubes in $\sL$ have the same side length, each cube in $\sL$ is either an $I$-cube or a corner, and cubical neighborhoods of corners of $\sL$ are mutually disjoint. A loop $\sL$ is a \emph{model loop} if it has exactly four corners. 
\end{definition}

Note that, by finiteness of $\sL$, the adjacency graph $\Gamma(\sL)$ of a cubical loop $\sL$ is always a cycle; see Figure \ref{fig:CubicalLoop} for an example. In particular, $|\sL|$ is homeomorpic to the solid $3$-torus $\bar B^2\times \bS^1$. 

\begin{figure}[h!]
\begin{overpic}[scale=0.10,unit=1mm]{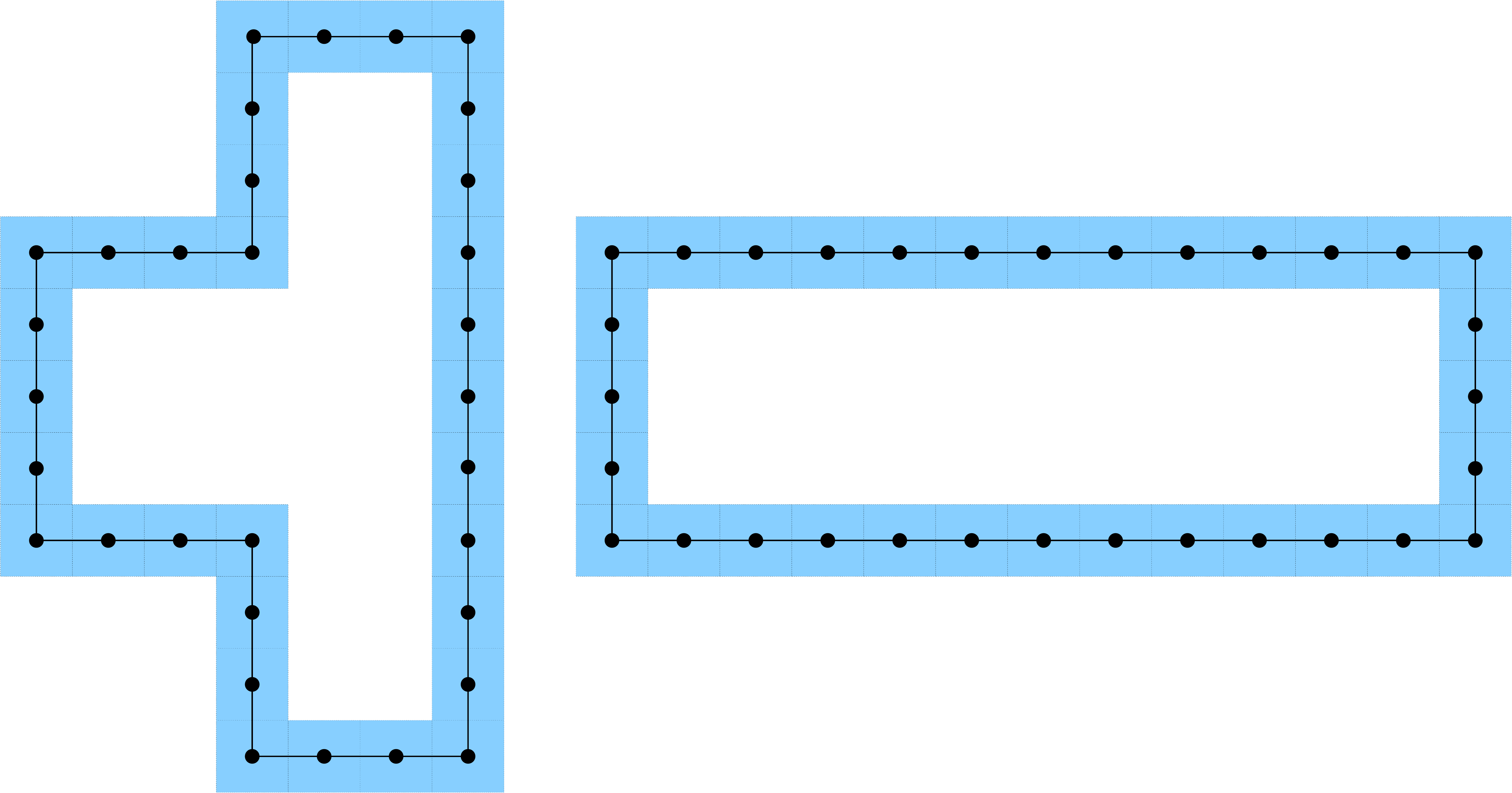} 
\end{overpic}
\caption{A cubical loop and a model loop of the same cubical length.}
\label{fig:CubicalLoop}
\end{figure}

\begin{definition}
A finite collection $\sA$ is a \emph{cubical arc} if the cubes in $\sA$ have the same side length, $\sA$ has two terminal cubes, each other cube in $\sL$ is either an $I$-cube or a corner, and cubical neighborhoods of corners of $\sL$ are mutually disjoint. A cubical arc is a \emph{segment} if it has no corners. 
\end{definition}

\begin{remark}
We would like to emphasize a technical point. In what follows, we consider cubical loops and arcs primarily as combinatorial objects. In particular, we keep distinguishing a cubical loop $\sL$ from its union $|\sL|$. Formally an arc $\sL$ is a collection of cubes in $\R^3$, whereas the union $|\sL|=\bigcup_{Q\in {\sL}} Q$ is a subset in $\R^3$. 
\end{remark}



\begin{remark}
We note also that cubical arcs admit an alternative characterization. A finite collection $\sf A$ is a cubical arc if and only if $|\sA|$ is an $3$-cell and there exists a linear order $Q_1,\ldots, Q_k$ of cubes in $\sA$, where $k=\# \sA$ is the cubical length of $\sA$, so that $Q_j \sim Q_{j+1}$ for each $j\in \{1,\ldots, k-1\}$. 
\end{remark}


The fact, which will play a crucial role in the forthcoming discussion, is that -- in their inner geometry -- a cubical arc is locally uniformly bilipschitz equivalent to a segment. We formulate this precisely in Lemma \ref{lemma:inner_bilip} after introducing some terminology for the statement.

The collection of $I$-cubes in $\sf A$ is naturally {partitioned} into maximal subarcs of $\sf A$, which we call \emph{$I$-blocks}. Given an $I$-block $\sf B$ in $\sf A$, we call $\sf B\setminus \Terminal(\sf B)$ a \emph{reduced $I$-block}. Note that the terminal cubes of an $I$-block are adjacent to either corners or terminal cubes of $\sf A$. The terminal cubes of a reduced $I$-block are adjacent to cubes in cubical neighborhood of corners or terminal cubes of $\sf A$. We denote $\cB(\sf A)$ and $\cB^\flat(\sf A)$ the collections of $I$-blocks and reduced $I$-blocks in $\sf A$, respectively. See Figure \ref{fig:CubeTypes} for an illustration of such partition.

\begin{figure}[h!]
\begin{overpic}[scale=0.20,unit=1mm]{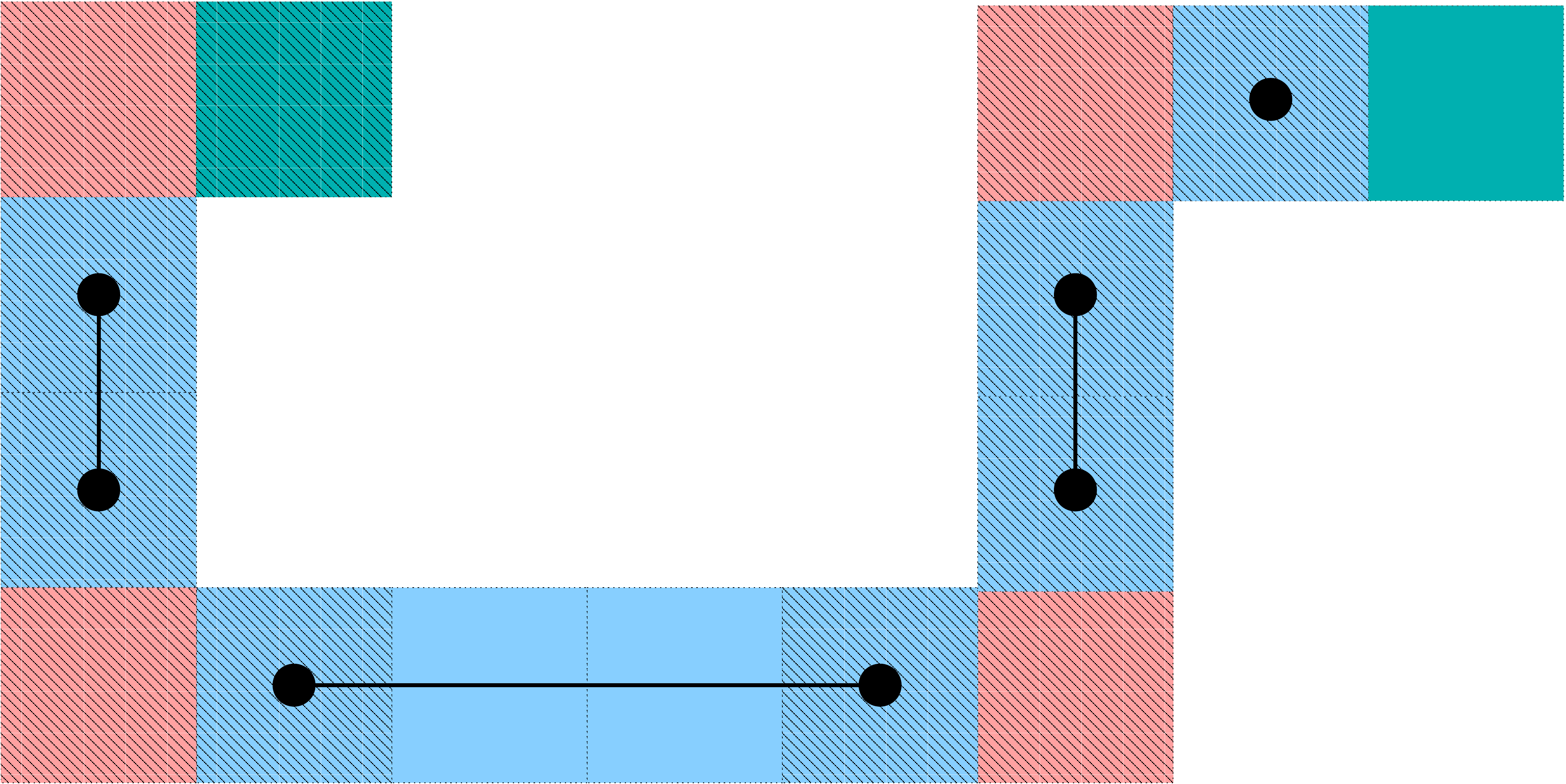} 
\end{overpic}
\caption{Corner cubes in pink, $I$-cubes in light blue, and terminal cubes in cyan. Cubical neighborhoods of corners shaded and  $I$-blocks marked with graphs; only one reduced $I$-block.}
\label{fig:CubeTypes}
\end{figure}




For the statement of bilipschitz equivalence of cubical arcs, 
we introduce the notion of a symmetry plane. Let $Q$ be a cube in an arc ${\sf A}$ which is not a terminal cube. Then $\cN_{\sf A}(Q)$ consists of three cubes and there exists a unique affine hyperplane (of codimension $1$) $P_{\sf A}(Q)$ in $\R^3$ which divides $|\cN_{\sf A}(Q)|$ into two congruent $3$-cells. We call $P_{\sf A}(Q)$ the \emph{symmetry plane of $Q$ (with respect to $\sf A$)}; see Figure \ref{fig:Corner2} for a symmetry plane for a corner. We use the same terminology and notation also in the case of cubical loops.

\begin{figure}[h!]
\begin{overpic}[scale=0.2,unit=1mm]{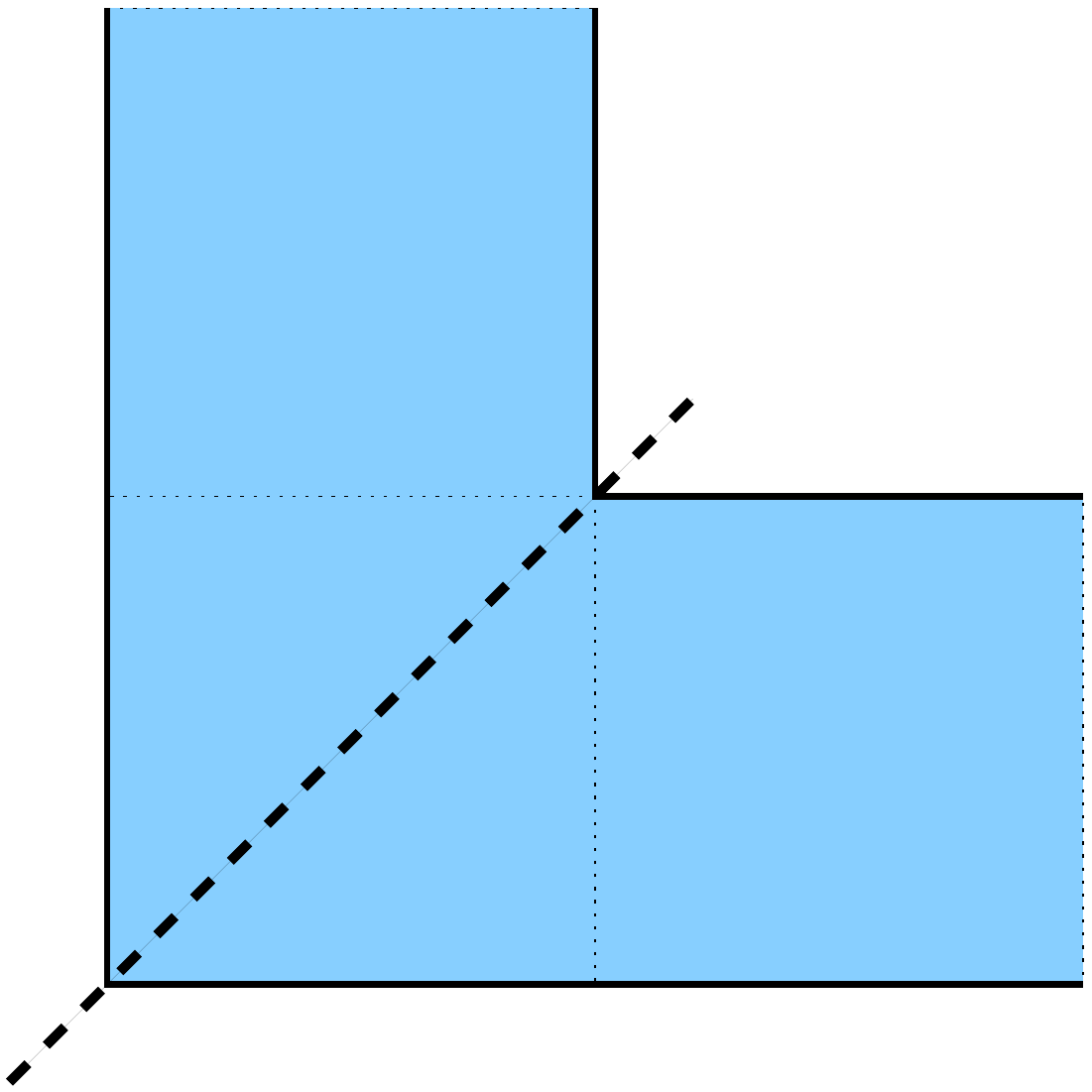} 
\end{overpic}
\caption{Symmetry plane of a corner.}
\label{fig:Corner2}
\end{figure}


\begin{lemma}
\label{lemma:inner_bilip}
There exists an absolute constant $L_\inner \ge 1$ with the following property: Let $\sf A$ be an arc in $\R^3$, and $\sf S$ a segment having the same number of cubes as $\sf A$ and having cubes of the same side length than $\sf A$. Then there exists a homeomorphism $\varphi_{\sf A}^{\sf S} \colon |\sf A|\to |\sf S|$ having the following properties:
\begin{enumerate}
\item for each $Q^{\sf A}_i \in {\sf A}$, the restriction $\varphi_{\sf A}^{\sf S} \colon |\cN_{\sf A}(Q^{\sf A}_i)|\to |\cN_{\sf A}(Q^{\sf S}_i)|$ is a well-defined homeomorphism, 
\item for each reduced $I$-block $\sf B\subset \sf A$, there exists an $I$-block $\sf B'\subset \sf S$ for which $\varphi_{\sf A}^{\sf S}|_{|\sf B|} \colon |\sf B|\to |\sf B'|$ is an isometry,
\item for each corner $Q^{\sf A}_i \in \sf A$, the restriction $\varphi_{\sf A}^{\sf S}|_{|\cN(Q^{\sf A}_i)|} \colon |\cN_{\sf A}(Q^{\sf A}_i)| \to |\cN_{\sf A}(Q^{\sf S}_i)|$ is $L_\inner$-bilipschitz, and
\item for each $Q^{\sf A}_i\in {\sf A}$, which is not a terminal cube in $\sf A$, the restriction $\varphi^S_{\sf A}|_{P_{\sf A}(Q^{\sf A}_i) \cap Q^{\sf A}_i} \colon P_{\sf A}(Q^{\sf A}_i) \cap Q^{\sf A}_i \to P_{\sf S}(Q^{\sf S}_i) \cap Q^{\sf A}_i$ is well-defined and affine.
\end{enumerate}
\end{lemma}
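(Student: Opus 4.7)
The plan is to build $\varphi^{\sf S}_{\sf A}$ by decomposing $|{\sf A}|$ into pieces on which the map is either an isometry or a transported copy of one fixed model map, and then to glue these pieces along symmetry planes. First I would linearly order the cubes of ${\sf A}$ as $Q^{\sf A}_1,\ldots,Q^{\sf A}_k$ and of ${\sf S}$ as $Q^{\sf S}_1,\ldots,Q^{\sf S}_k$ so that consecutive cubes are adjacent, and use the index-preserving matching $Q^{\sf A}_i \leftrightarrow Q^{\sf S}_i$. Since the cubical neighborhoods of corners of ${\sf A}$ are mutually disjoint by definition, and each reduced $I$-block is precisely the family of cubes strictly between two consecutive corner (or terminal) neighborhoods, the set $|{\sf A}|$ is partitioned by the corner neighborhoods $|\cN_{\sf A}(Q^{\sf A}_{i_\ell})|$, the two terminal-cube neighborhoods, and the reduced $I$-blocks, with any two consecutive pieces meeting along exactly one symmetry plane $P_{\sf A}(Q^{\sf A}_j)$.

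The whole construction rests on a single model map. I would fix the standard L-shaped union $L \subset \R^3$ of three unit cubes meeting at a common corner cube and the standard straight union $T \subset \R^3$ of three unit cubes along a line, and build by hand a bilipschitz homeomorphism $\Psi \colon L \to T$ with the following properties: $\Psi$ is an isometry on a neighborhood of each of the two outer unit-square faces of $L$; $\Psi$ sends each unit cube of $L$ onto the corresponding unit cube of $T$ in the natural order; and $\Psi$ maps the two interior symmetry planes of $L$ affinely onto the two interior symmetry planes of $T$. A piecewise affine construction over the three cubes of $L$ delivers such a $\Psi$ with some absolute bilipschitz constant $L_\inner$.

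With $\Psi$ in hand, I would define $\varphi^{\sf S}_{\sf A}$ piecewise. On each corner neighborhood $|\cN_{\sf A}(Q^{\sf A}_{i_\ell})|$, transport $\Psi$ by the unique similarities from $L$ to $|\cN_{\sf A}(Q^{\sf A}_{i_\ell})|$ and from $T$ to $|\cN_{\sf S}(Q^{\sf S}_{i_\ell})|$ that match the cubes in the given order; this yields an $L_\inner$-bilipschitz map that is an isometry near its two outer faces and affine on symmetry planes. On each reduced $I$-block and on each terminal neighborhood, take the unique rigid motion of $\R^3$ sending the piece isometrically onto the corresponding consecutive cubes of ${\sf S}$.

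The main obstacle will be global consistency: verifying that the piecewise definition assembles into one homeomorphism $|{\sf A}| \to |{\sf S}|$. This reduces to checking that on every shared unit square between a corner neighborhood and an adjacent straight piece, the two locally defined maps induce the same isometry onto the corresponding shared face in ${\sf S}$. The isometry property of $\Psi$ near its outer faces forces the boundary trace of each corner map to be a specific isometry, and the rigid motion on each adjacent reduced $I$-block (or terminal neighborhood) is then uniquely determined as the extension of that boundary isometry; proceeding piece by piece along the linear order, consistency is maintained throughout. Once the gluing is established, property~(1) follows since each $\cN_{\sf A}(Q^{\sf A}_i)$ is contained in at most two adjacent pieces meeting along a symmetry plane; (2) is immediate from the rigid motions on reduced blocks; (3) is inherited from the absolute bilipschitz constant of $\Psi$; and (4) follows from the affine-on-symmetry-planes property of $\Psi$ together with the rigidity of the block maps.
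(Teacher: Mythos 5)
Your proposal follows essentially the same route as the paper: both reduce the construction to a fixed model bilipschitz map on a three-cube L-shape taken to a three-cube segment, transport copies of that model to the corner neighborhoods, and extend by isometries over the reduced $I$-blocks, with consistency along shared faces guaranteed by requiring the model to be an isometry near its outer faces. The only cosmetic difference is that the paper realizes the model map as a product $\varphi\times\id$ with a two-dimensional bilipschitz map $\varphi\colon D\to D'$, whereas you build the three-dimensional model $\Psi$ directly and you spell out the face-matching argument a bit more explicitly; neither difference changes the substance.
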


\begin{proof}
It suffices to consider the special case that $\#\sf A =3$ and $\sf A$ has a corner. In this case, we may assume that $|\sf A|$ is the $3$-cell $D\times [0,1]$, where
\[
D = \left([0,1]\times [1,2]\right) \cup [0,1]^2 \cup \left( [1,2]\times [0,1] \right).
\]
Let now $\sf A'$ be a straight arc of three cubes. Again, we may assume that $|{\sf A'}|=D'\times [0,1]$, where $D' = \left( [-1,0]\times [0,1]\right) \cup [0,1]^2 \cup \left([1,2]\times [0,1]\right)$; see Figure \ref{fig:Corner1}.

\begin{figure}[h!]
\begin{overpic}[scale=0.2,unit=1mm]{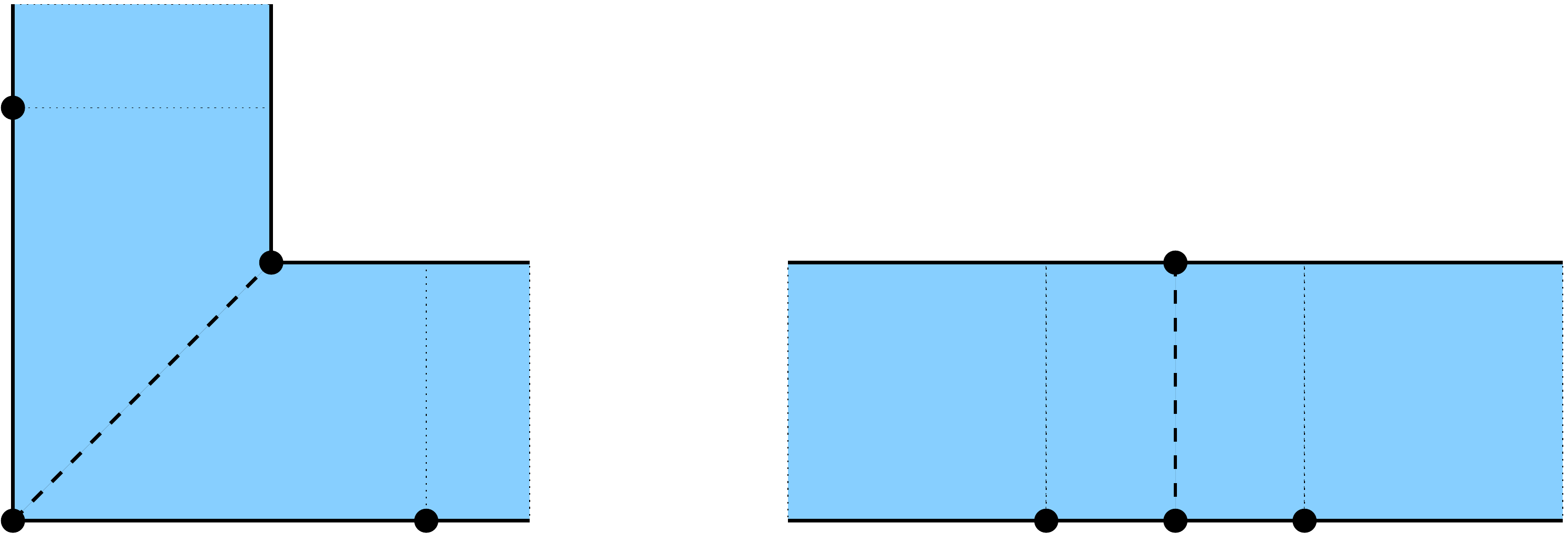} 
\put(13,13){\tiny $D$}
\put(45,13){\tiny $D'$}
\end{overpic}
\caption{Cells $D$ and $D'$}
\label{fig:Corner1}
\end{figure}

Then, clearly, there exists a bilipschitz homeomorphism $\varphi \colon D \to D'$ for which 
\begin{enumerate}
\item $\varphi|_{\{2\}\times [0,1]} = \id$ and 
\item $\varphi|_{[0,1]\times \{2\}} \colon [0,1]\times \{2\} \to \{-1\}\times [0,1]$ is an isometry. \label{item:isometry}
\end{enumerate}
Now $\varphi \times \id \colon D\times [0,1]\to D'\times [0,1]$ satisfied the requirements of the claim. 

The general case of an arc $\sf A$ now follows by straightening all neighborhoods of corners in $\sf A$ by copies of the map $\varphi$ and extending isometrically over all reduced $I$-blocks using the property \eqref{item:isometry}.  
\end{proof}




\section{Nested loops and twist maps}

In this section we consider pairs $(\sL,\sL')$ of cubical loops $\sL$ and $\sL'$ for which $|\sL'|\subset \interior |\sL|$ and the pair $(|\sL|, |\sL'|)$ is homeomorphic to the pair $(\bar B^2\times \bS^1, \bar B^2(1/2)\times \bS^1)$. 
We also consider particular self-homeomorphisms $\theta \colon (|\sL|,|\sL'|)\to (|\sL|,|\sL'|)$ of pairs which are identity on the boundary of $|\sL|$ and, in an heuristic sense, rotate the inner cubical loop $\sL'$. Before giving a precise definitions, we describe a round model for these self-homeomorphisms.

Let $0<r<R$ and $\ell>0$, and consider the solid tori $T(R;\ell) = \bar B^2(R)\times \bS^1(\ell)$ and $T(r;\ell) = \bar B^2(r)\times \bS^1(\ell)$. Now $(T(R;\ell),T(r;\ell))$ is a pair of solid $3$-tori homeomorphic to $(\bar B^2\times \bS^1, \bar B^2(1/2)\times \bS^1)$. Let now $\alpha \in [0,2\pi)$ be an angle. Let also $u_{r,R}\colon \bar B^2(R)\to [0,\alpha]$ be the function 
\[
x\mapsto \left\{ \begin{array}{ll}
\alpha, & |x|\le r \\
\alpha \frac{R-|x|}{R-r}, & 0\le |x|\le R\\
\end{array}\right.
\]
and $h^\alpha_{r,R;\ell} \colon T(R;\ell) \to T(R;\ell)$ be the homeomorphism
\[
(x,z) \mapsto (x, e^{iu_{r,R}(x)} z).
\]
In fact, $h^\alpha_{r,R;\ell}$ is a homeomorphism of pairs 
\[
h^\alpha_{r,R;\ell} \colon (T(R;\ell),T(r;\ell)) \to (T(R;\ell),T(r;\ell)),
\]
which is the identity on the boundary $\partial T(R;\ell)$ of $T$ and a rotation, by angle $\alpha$, on the solid torus $T(r;\ell)$. Metrically, $h^\alpha_{r,R;\ell}$ is bilipschitz with a constant $L\ge 1$ depending only on the angle $\alpha$ and the ratio $R/r$.

We describe now the cubical versions of the pair $(T(R;\ell),T(r;\ell)$ and homeomorphisms $h^\alpha_{r,R;\ell}$. We begin with nested loops and define after that the cubical twist maps.

\subsection{Nested loops}

A pair $(\sL,\sL')$ of cubical loops is a \emph{nested pair} if 
\begin{enumerate}
\item $|\sL'| \subset \interior |\sL|$, 
\item $I$-cubes of $L$ contain only $I$-cubes of $\sL'$,
\item each corner of $\sL$ contains exactly one corner of $\sL'$,
\item for each corner $Q$ of $\sL$ and the unique corner $Q'$ of $L'$ contained
in $Q$, have the same symmetry plane, that is, $P_\sL(Q)=P_{\sL'}(Q')$;
\end{enumerate}
see Figure \ref{fig:NestedLoops} for an illustration. Further, we say the nested pair $(\sL,\sL')$ is \emph{uniform} if 
\[
\dist(|\sL'|, \partial |\sL|) \ge \sigma(\sL');
\]
where $\sigma(\sL')$ is the common side length of cubes in $\sL'$.

\begin{figure}[h!]
\begin{overpic}[scale=0.1,unit=1mm]{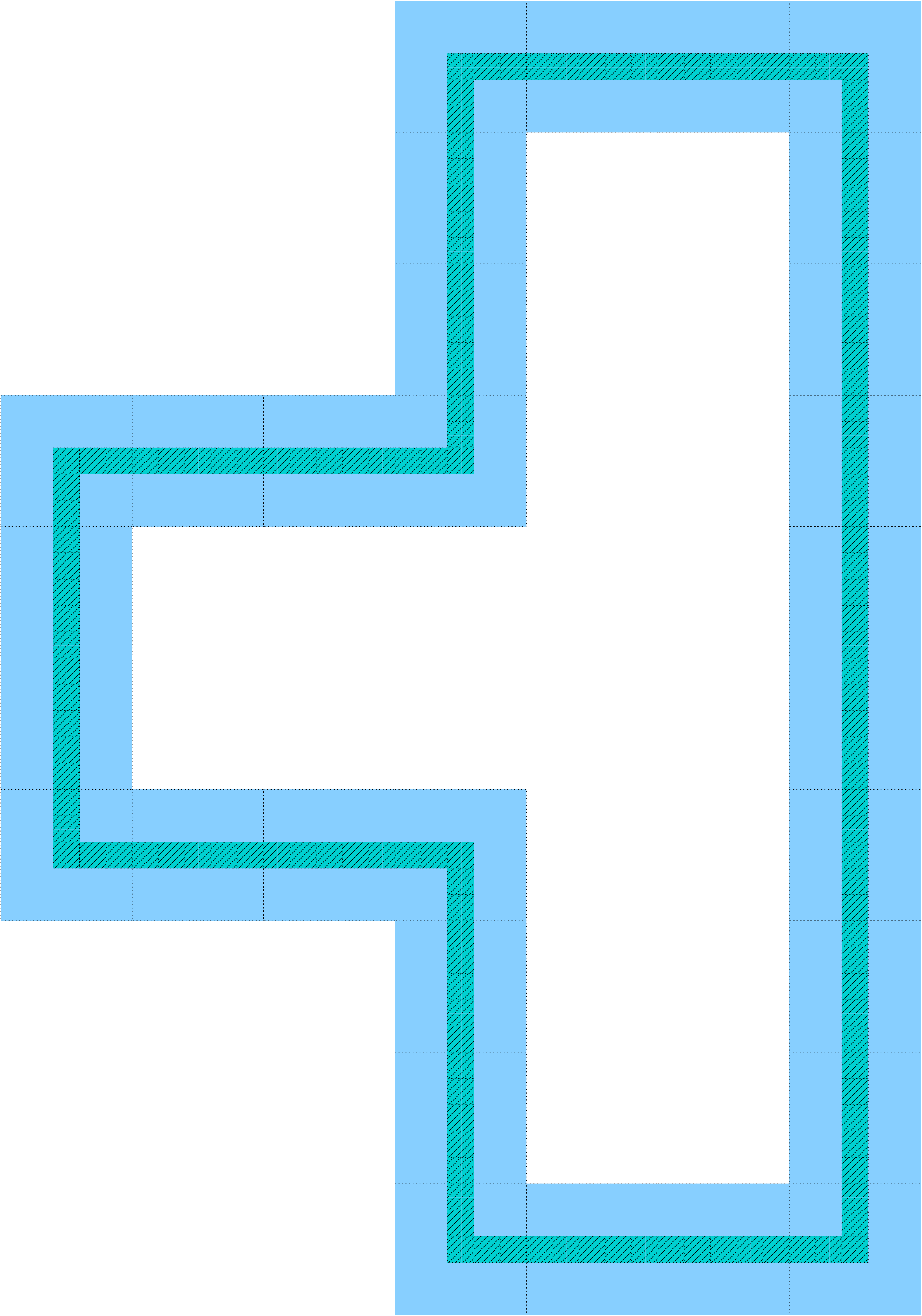} 
\end{overpic}
\caption{Nested loops.}
\label{fig:NestedLoops}
\end{figure}

A nested pair $\mathscr{P}$ has a natural parametrization $\tau_{\mathscr{P}}$ with a pair of round solid tori. We record this observation as a lemma. In what follows, when we refer to a natural parametrization of a nested pair, we mean the map constructed in the proof of this lemma. We also refer to the pair $(T(R;\ell), T(r;\ell))$ of solid $3$-tori as the \emph{round model for the nested pair $\mathscr{P}$}.

\subsection{Twist maps}

We define now self-homeomorphisms of pairs $(|\sL|,|\sL'|)$ for nested cubical loops which are the basis of the our forthcoming constructions. 

Let $(\sL, \sL')$ be a pair of nested loops. Then there exists a natural map $\iota \colon \sL' \to \sL$ induced by inclusion of the cubes in $\sL'$ into cubes in $\sL$, that is, for each $Q'\in \sL'$ the image $\iota(Q')$ is the unique cube in $\sL$ containing $Q'$. 

Given an order $<_{\sL}$ for cubes in $\sL$, there exists a unique cyclic order $<_{\sL'}$ for cubes in $\sL'$ for which the map $\iota$ is order preserving. Therefore, we may consider each nested pair as an ordered pair in this sense. 

Let now $\mathscr{P} = (\sL,\sL')$ be an ordered nested pair. We call an orientation preserving bijection $\rho \colon \sL' \to \sL'$ a \emph{rotation of $\sL'$ in $\sL$}. 


\begin{definition}
\label{def:twist}
A homeomorphism $h \colon (|\sL|, |\sL'|) \to (|\sL|, |\sL'|)$ is a \emph{$\rho$-twist} if $h|_{\partial |\sL|} = \id$ and, for each $Q'\in \sL'$, 
\begin{enumerate}
\item $h(\cN_{\sL'}(Q')) = \cN_{\sL'}(\rho(Q'))$, and
\item if $Q'$ and $\rho(Q')$ are not corners or adjacent to corners of $\sL'$, then $h|_{Q'}\colon Q'\to \rho(Q')$ is an isometry.
\end{enumerate}
\end{definition}

The main observation of this section is the following lemma. For the statement, let $R_\pm \colon \R^3 \to \R^3$ be the linear involution $(x_1,x_2,x_3) \mapsto (x_1,\pm x_2, -x_3)$, as in the introduction. We also say that a nested pair $\mathscr P = (\sL,\sL')$ is \emph{$R_\pm$-symmetric} if the maps $\sL \to \sL$, $Q\mapsto R_\pm(Q)$, and $\sL'\to \sL'$, $Q'\mapsto R_\pm(Q')$, are well-defined  bijections.

\begin{lemma}\label{lemmatarkea}
There exists a constant $L_\Diamond\ge 1$ with the following property: Let $\mathscr P =(\sL, \sL')$ be a uniform nested pair symmetric with respect to $R_\pm$ and $\rho \colon \sL'\to \sL'$ a rotation. Then there exists a $\rho$-twist $h_\rho \colon (|\sL|, |\sL'|) \to (|\sL|, |\sL'|)$ for which the involution 
\begin{equation}\label{eq:cancellation}
h_\rho \circ R_\pm \circ h_\rho^{-1} \colon (|\sL|, |\sL'|) \to (|\sL|, |\sL'|)
\end{equation}
is locally $L_\Diamond \frac{\sigma(\sL)}{\sigma(\sL')} (\#\sL)$-bilipschitz.
\end{lemma}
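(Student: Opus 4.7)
The strategy is to reduce to a round-model computation via the natural parametrization $\tau_{\mathscr P}$ of the nested pair $\mathscr P = (\sL, \sL')$, where the cancellation of the twist by $R_\pm$ becomes a transparent algebraic identity, and then pull back. The $R_\pm$-symmetry hypothesis enters precisely by guaranteeing that $R_\pm$ descends to a Euclidean isometry of the round model.

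Via the natural parametrization we identify $(|\sL|, |\sL'|)$ with a pair of round solid tori $(T(R;\ell), T(r;\ell))$ in which $R \asymp \sigma(\sL)$, $R - r \gtrsim \sigma(\sL')$ (by uniformity), and $\ell \asymp \sigma(\sL)\cdot\#\sL$, with bilipschitz distortion controlled by $L_\inner$. Using the $R_\pm$-symmetry of $\mathscr P$ we arrange $\tau_{\mathscr P}$ so that $\tilde R_\pm := \tau_{\mathscr P} \circ R_\pm \circ \tau_{\mathscr P}^{-1}$ is a Euclidean isometry of the round torus of the form $(x,t) \mapsto (Ax,\, \epsilon t + t_0)$ with $A \in O(2)$, $\epsilon \in \{\pm 1\}$, and $t_0 \in \bS^1(\ell)$.

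Next we construct the twist in the round model. Since $\rho$ is an orientation-preserving bijection of the cyclic order $\sL'$, it is a cyclic shift by some $k \in \{0,\ldots,\#\sL'-1\}$, corresponding to a longitudinal shift of length $\alpha_\rho := k\sigma(\sL')$. Define $u\colon [0,R]\to \R$ by $u\equiv \alpha_\rho$ on $[0,r]$, $u(R)=0$, and linear on $[r,R]$, and set $\tilde h(x,t) := (x,\, t + u(|x|))$. Then $\tilde h$ is identity on $\partial T(R;\ell)$, shifts $T(r;\ell)$ longitudinally by $\alpha_\rho$, and is $C(1 + |u'|)$-Lipschitz with $|u'| \le \alpha_\rho/(R-r) \lesssim k$. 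Because $A$ is orthogonal, $|Ax| = |x|$, and a direct computation yields
\[
\tilde h \circ \tilde R_\pm \circ \tilde h^{-1}(x,t) = (Ax,\ \epsilon t + (1-\epsilon)\,u(|x|) + t_0).
\]
For $\epsilon=1$ this equals the isometry $\tilde R_\pm$ composed with a longitudinal translation; for $\epsilon=-1$ the differential has norm bounded by $1+2|u'| \lesssim 1+k$. This is the essential cancellation underlying \eqref{eq:cancellation}: the conjugate involution is $C(1+k)$-bilipschitz, linearly in $k$ rather than quadratically as one would get from naively multiplying the bilipschitz constants of $\tilde h$ and $\tilde h^{-1}$.

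Set $h_\rho := \tau_{\mathscr P}^{-1} \circ \tilde h \circ \tau_{\mathscr P}$. The universal bilipschitz distortion of $\tau_{\mathscr P}$ makes $h_\rho \circ R_\pm \circ h_\rho^{-1}$ locally $L(1+k)$-bilipschitz, and using $k \le \#\sL' \asymp \#\sL \cdot \sigma(\sL)/\sigma(\sL')$ yields the claimed bound with a universal constant $L_\Diamond$. The twist conditions of Definition \ref{def:twist} are verified directly: identity on $\partial|\sL|$ is immediate; the neighborhood condition holds because $\alpha_\rho$ transports the round neighborhood of each inner cube to that of its $\rho$-image; and the isometry property on inner cubes in reduced $I$-blocks away from corners follows from the pure-translation nature of $\tilde h$ on $T(r;\ell)$, pulled back via the isometric portions of $\tau_{\mathscr P}$ provided by Lemma \ref{lemma:inner_bilip}(2). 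The principal obstacle is the $R_\pm$-equivariant choice of $\tau_{\mathscr P}$: one must build it to be simultaneously universally bilipschitz, compatible with the cubical neighborhood data, and equivariant under $R_\pm$. This is arranged by fixing the corner-neighborhood parametrizations on a fundamental domain for the $R_\pm$-action on $\sL$, extending equivariantly, and interpolating via Lemma \ref{lemma:inner_bilip} on reduced $I$-blocks; the isometric rigidity on $I$-blocks together with the agreement of symmetry planes in the definition of nested pair ensures that the pieces assemble consistently.
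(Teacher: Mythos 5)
Your proposal is correct in its essential mechanism and lands on the same bound, but you work in a different model than the paper does, and the choice affects where the technical burden falls.

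The paper does not carry out the computation in a round solid torus. Instead it passes to the $\Z$-cover: it builds a locally $L_\inner$-bilipschitz covering map $g_\phi\colon |\widetilde{\sL}| = \R\times[0,\sigma(\sL)]^2 \to |\sL|$ by applying Lemma~\ref{lemma:inner_bilip} to straighten the cubical loop into an infinite cubical line, lifts the rotation realization $h'_\rho$ on $|\sL'|$ to $\tilde h'_\rho$ on the inner cubical line, extends it to $\tilde h_\rho$ on the full cylinder by affine interpolation along the segments $[x,S(x)]$ joining each boundary point $x$ to the inner square, and verifies by hand that $\tilde h_\rho$ is a deck transformation for $g_\phi$ so that it descends. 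The crucial cancellation is then established by checking the lift $\tilde R(x,y,z)=(-x,y,z)$ and bounding $|\tilde H(p)-\tilde H(S(p))|\le 2\sigma(\sL')k_\rho$ for $\tilde H=\tilde h_\rho\circ\tilde R\circ\tilde h_\rho^{-1}$, which gives linearity in $k_\rho$ rather than the quadratic bound from naively multiplying bilipschitz constants. That is the same cancellation your closed-form identity $\tilde h\circ\tilde R_\pm\circ\tilde h^{-1}(x,t)=(Ax,\,\epsilon t+(1-\epsilon)u(|x|)+t_0)$ exhibits, just read off algebraically.

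What your route buys is transparency: the conjugation identity in the round model $T(R;\ell)=\bar B^2(R)\times\bS^1(\ell)$ makes the cancellation manifest at a glance, and the estimate $|u'|\lesssim k$ (using uniformity $R-r\gtrsim\sigma(\sL')$) directly yields the $1+k$ bound. What it costs is the a priori construction of the $R_\pm$-equivariant parametrization $\tau_{\mathscr P}$ with a uniform bilipschitz constant and the right cubical compatibility. The paper itself alludes to such a ``natural parametrization'' but never states or proves the corresponding lemma; the actual proof sidesteps it by working in the cover, where the covering $g_\phi$ is built piecewise from Lemma~\ref{lemma:inner_bilip} and the $R$-equivariance reduces to the simple observation $g_\phi(-t,w)=R(g_\phi(t,w))$ once identical lifts are arranged. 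Your sketch of the equivariant $\tau_{\mathscr P}$ — fundamental domain, equivariant extension, isometry on reduced $I$-blocks, matching of symmetry planes at corners — is the right outline, but to make the proof complete you would need to carry it out at roughly the same level of detail as the paper's construction of $g_\phi$, $S$, and $\iota$; in particular you must check that the $R_\pm$-fixed set meets the cubical loop consistently with your fundamental-domain choice and that the pieces glue at the two fixed faces. With that supplied, the two proofs are equivalent up to the unrolling/rolling-up of the model.
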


\begin{proof}
By the method of Lemma \ref{lemma:inner_bilip}, there exists an absolute constant $L\ge 1$ and an $L$-bilipschitz homeomorphism $h'_\rho \colon |\sL'| \to |\sL'|$ realizing the rotation $\rho$, that is, a map satisfying conditions (1) and (2) in Definition \ref{def:twist} for each cube in $\sL'$. We extend now $h'_\rho$ to a $\rho$-twist $h_\rho \colon |\sL|\to |\sL|$.

Let $\widetilde \sL$ be the cubical line $\widetilde \sL = \{ \sigma(\sL) ( k e_1 + [0,1]^3) \colon k \in \Z\}$ in $\R^3$. We fix in $\widetilde \sL$ the natural order induced by $\Z$ and let $\phi \colon \widetilde \sL \to \sL$ be a natural cubical covering map, that is, an order preserving map $\Gamma(\widetilde \sL) \to \Gamma(\sL)$ which is an injection in $\cN_{\widetilde{\sL}}(\tilde Q)$ for each $\tilde Q\in \widetilde \sL$.

Using the method of Lemma \ref{lemma:inner_bilip} again, we may now fix a locally $L$-bilipschitz covering map $g_\phi \colon |\widetilde \sL|\to |\sL|$ realizing $\phi$, that is, satisfying (1) and (2)  in Definition \ref{def:twist} for each cube in $\widetilde \sL$. Note that, we may assume that cubes in $|\sL|$ have identical lifts, that is, if $\tilde Q_1$ and $\tilde Q_2$ in $\sL$ satisfy $g_\phi(\cN_{\widetilde{\sL}}(\tilde Q_1)) = g_\phi(\cN_{\widetilde{\sL}}(\tilde Q_2))$ then $g_\phi(x + ke_1) = g_\phi(x)$ for $x\in \tilde Q_1$, where $k\in \Z$ satisfies $\tilde Q_2 = \tilde Q_1 +ke_1$.

Since $(\sL, \sL')$ is a nested pair, we have by condition (2) in the definition of nested pairs, that we may in addtion assume that there exists a vector $v\in \R^3$ for which the infinite cubical line $\widetilde{\sL'} = \{ \sigma(\sL)( ke_1 + [0,1]^k + v) \colon k \in \Z\}$ has the property that there exists a natural cubical covering map $\phi' \colon \widetilde{\sL'} \to \sL'$ for which the restriction $g_\phi|_{|\widetilde{\sL'}|}$ is a realization of $\phi'$, that is, 
\[
g_\phi(\cN_{\widetilde{\sL'}}(\tilde Q')) = \cN_{\widetilde{\sL'}}(\phi'(\tilde Q'))
\]
for each $\tilde Q' \in \widetilde{\sL'}$.

Let now $\tilde h'_\rho \colon |\widetilde {\sL'}|\to |\widetilde{\sL'}|$ be a lift of $h'_\rho$ in the covering map $g_\phi$. To define an extension of $\tilde h'_\rho$ we give preliminary definitions.

We observe first that $|\widetilde \sL| = \R\times [0,\sigma(\sL)]^2$ and $|\widetilde{\sL'}| = \R\times (v+[0,\sigma(\sL')]^2)$, where $[0,\sigma(\sL)]^2$ and $v+[0,\sigma(\sL')]^2$ a concentric squares in $\R^2$. Let now $s \colon [0,\sigma(\sL)]^2 \to v+[0,\sigma(\sL')]^2$ be the unique square scaling mapping $\widetilde{\sL} \to \widetilde{\sL'}$, and let $S = \id \times s \colon |\sL|\to |\sL'|$. Then $S$ is a bilipschitz homeomorphism with a constant depending only on the ratio $\sigma(\sL)/\sigma(\sL')$.

We fix now a lift of the identity map $\id \colon \partial |\sL|\to \partial |\sL|$ as follows. Let $\tilde Q\in \sL$ be cube for which $\phi(\tilde Q)$ is an $I$-cube, and let $x_0\in \tilde Q\cap \partial |\widetilde \sL|$. Then there exists a unique lift $\iota \colon \partial |\widetilde \sL| \to \partial |\widetilde \sL|$ of the identity map $\id \colon \partial |\sL|\to \partial |\sL|$ satisfying $S(\iota(x_0))=\tilde h'_\rho(S(x_0))$.  

We are now ready to define the extension of $\tilde h'_\rho$. Let $\tilde h_\rho \colon |\widetilde \sL|\to |\widetilde \sL|$ be the unique map satisfying  $\tilde h_\rho|_{\partial |\widetilde \sL|} = \iota$, $\tilde h_\rho|_{|\widetilde \sL'} = \tilde h'_\rho$, and which, for each $x\in \partial |\widetilde \sL|$ affinely maps the segment $[x,S(x)]$ to the segment $[\iota(x),\tilde h'_\rho(S(x))]$.

To estimate the bilipschitz constant of $\tilde h_\rho$, we observe that, since $\tilde h'_\rho$ is a lift of a realization of a rotation $\rho$, there exists a bijection $\tilde \rho \colon \widetilde{\sL'}\to \widetilde{\sL'}$ having $\tilde h'_\rho$ as its realization. In fact, $\tilde \rho$ is a lift of $\rho$. The bijection $\tilde \rho$ is a translation in $\widetilde{\sL'}$ by $k_\rho\in \Z$ cubes, where $k_\rho$ is the rotation distance of $\rho$ in $\sL'$. Thus, the bilipschitz constant of $\tilde h_\rho$ depends only on the ratio of the distance $\dist(\partial |\sL|, |\sL'|)$ and the translation distance $(\sigma(\sL') k_\rho$ of $\rho$. Since the translation distance is at most the length of the cubical torus $|\sL|$ and the pair $(\sL,\sL')$ is uniform, we conclude that the bilipschitz constant of $\tilde h_\rho$ is bounded by the ratio
\[
\frac{\sigma(\sL')k_\rho}{\dist(\partial |\sL|, |\sL'|)} \lesssim \frac{\sigma(\sL) \#\sL}{\sigma(\sL')} = \frac{\sigma(\sL)}{\sigma(\sL')} \# \sL.
\]

Since both $\iota$ and $\tilde h'_\rho$ are deck transformation for restrictions of $g_\phi$ and the covering map $g_\rho$ is natural translation invariance, we conclude that $\tilde h_\rho$ is a deck transformation of the covering map $g_\phi$, that is, $g_\phi = g_\phi \circ \tilde h_\rho$. Thus $\tilde h_\rho$ decends to a bilipschitz homeomorphism $h_\rho \colon |\sL|\to |\sL|$. This completes the construction of $\tilde h_\rho$. It remains to show the local bilipschitz estimate \eqref{eq:cancellation}.
 
Let $R\colon \R^3 \to \R^3$ be the involution $R_+$ and suppose that the nested pair $(\sL,\sL')$ is symmetric with respect to $R$; the case of $R_-$ is similar. Then  $\{0\}\times \R^2 \cap |\sL|$ is the fixed point set of $R|_{|\sL|}$.

Since $|\sL|$ is $R$-symmetric, we observe that also the map $g_\phi$ is symmetric in the sense that $g_\phi(-t,w) = R(g_\phi(t,w))$ for all $(t,w) \in \R \times [0,\sigma(\sL)]^2$. Thus, the lift $\tilde R\colon |\widetilde \sL|\to |\widetilde \sL|$ of $R|_{|\sL|} \colon |\sL|\to |\sL|$ under $g_\phi$ is actually the involution $(x,y,z) \mapsto (-x,y,z)$. 

We conclude that 
\[
\tilde H = \tilde h_\rho \circ \tilde R \circ \tilde h_\rho^{-1} \colon |\widetilde \sL|\to |\widetilde \sL|
\]
is therefore an involution with the following properties:
\begin{enumerate}
\item on $\partial |\widetilde \sL|$ is the map $(t,z) \mapsto \iota(-\iota_1(t,x), \iota_2(t,w))$, where $\iota = (\iota_1,\iota_2) \colon |\widetilde \sL| \to \R\times [0,\sigma(\sL)]^2$,
\item on $|\widetilde{\sL'}|$ is the map $(t,w) \mapsto \tilde h'_\rho(-(\tilde h'_\rho)_1(t,w), (\tilde h'_\rho)_2(t,w))$, where $\tilde h'_\rho = ((\tilde h'_\rho)_1, (\tilde h'_\rho)_2) \colon |\widetilde{\sL'}| \to \R\times v+[0,\sigma(\sL')]^2$, and
\item for each $p\in \partial |\sL|$, affine map on the segment $[p,S(p)]$.
\end{enumerate}

For points $p\in \partial |\widetilde \sL|$, we further have the estimate 
\[
|\tilde H(p)-\tilde H(S(p))| \le 2 \sigma(\sL')k_\rho.
\]
Thus $\tilde H$ is bilipschitz with a depending only on  
\[
\frac{\sigma(\sL)}{\sigma(\sL')} \# L.
\]

Since 
\[
g_\phi \circ \tilde h_\rho \circ \tilde R \circ \tilde h_\rho^{-1} = h_\rho \circ g_\phi \circ \tilde R \circ \tilde h_\rho^{-1} = h_\rho \circ R \circ g_\phi \circ \tilde h_\rho^{-1} = h_\rho \circ R \circ h_\rho^{-1} \circ g_\phi,
\]
we conclude that $h_\rho \circ R \circ h_\rho^{-1}$ satisfies the bilipschitz estimate \eqref{eq:cancellation}.
\end{proof}

\section{Bing's wild involution}

We begin now the construction of a cubical version of Bing's wild involution. The involution is based on the construction of a defining sequence for a decomposition in $\bS^3$ and shrinkability of this decomposition. The wild Cantor set obtained by this construction is called Bing's double. We recall first this topological part of the construction and then discuss the involution. 

We refer to Daverman's book \cite{Daverman-book} for the terminology related to decomposition spaces. We merely recall that the decomposition space $\R^3/E$ associated to a compact set $E\subset \R^3$ is the quotient space $\R^3/{\sim}$, where $\sim$ is the minimal equivalence relation for which $x\sim y$ if and only if the points $x$ and $y$ in $\R^3$ belong to the same component of $E$.

\begin{remark}
Although the statement of Theorem \ref{thm:main} is for involutions of $\bS^3$, we work in $\R^3$ to simplify the notation. Note that the constructions of the defining sequences take place in a solid $3$-torus in $\R^3$ and hence the wild involutions $\R^3\to \R^3$ we construct, extend naturally to involutions of $\bS^3$ fixing the north pole $e_4$ after identification of $\R^3$ with $\bS^3\setminus \{e_4\}$ by the stereographic projection.
\end{remark}

\subsection{The Set-up}

For the rest of the discussion, we fix $R \colon \R^3\to \R^3$ to be either the orientation reversing involution 
\[
R \colon (x,y,z) \mapsto (-x,y,z)
\]
considered by Bing \cite{Bing} or the orientation preserving involution 
\[
R \colon (x,y,z)\mapsto (-x,-y,z)
\]
considered by Montgomery and Zippin \cite{Montgomery-Zippin}. Having this choice in mind, we fix now a decomposition $\sB$ of $\R^3$, called Bing's double, which is invariant under this involution.

The defining sequence $(X_k)$ for the decomposition $\sB$ is given as follows. First, let $(\sL_w)_w$ be a tree of cubical loops as in \cite{Bing, Bing1988}, where $w$ is a finite word in letters $\{+,-\}$; the cubical loop corresponding the empty word $\emptyset$ we denote $\sL_0$. We refer to \cite{Bing, Bing1988} for a description of the linking; the tori are said to 'hook elbows'. As in \cite{Bing, Bing1988} and \cite{Montgomery-Zippin}, we assume that each cubical loop $\sL_w$ is symmetric with respect to the fixed involution $R$, that is, for each word $w$, the map $\sL_w \to \sL_w$, $Q \mapsto R(Q)$, is a well-defined bijection. In particular, $R(|\sL_w|) = |\sL_w|$ for each word $w$.



For each $k\in \N$, we set $X_k$ to be the union of all cubical loops $\sL_w$ for words $w$ of length $k$. Let now $\sB$ be the decomposition of $\R^3$ for which the non-trivial elements are the components of the intersection
\[
X=\bigcap_{k \ge 1} X_k.
\]


Bing shows the shrinkability of the decomposition $\sB$ by a delicate folding procedure of the tori $|\sL_w|$; see \cite[Figures 9--12]{Bing1988}. Using our terminology, we may say that these foldings corresponds to a choice of a family of twist maps $\{ |\sL_w|\to |\sL_w| \}_w$. The particular choices of the angles of associated twist maps is not relevant for our considerations. Thus, in what follows, we discuss mainly the properties of the these twist maps and refer to \cite[pp.~491--492]{Bing1988} for a careful description of the foldings which yield the shrinkability of the decomposition $\sB$. 

Let $\phi \colon \R^3 \to \R^3$ be a monotone map associated to the decomposition $\sB$, that is, $\phi|_{\R^3\setminus X} \colon \R^3 \setminus X \to \R^3\setminus \phi(X)$ is a homeomorphism and $\phi(C)$ is a point for each non-trivial element $C$ of $\sB$. We denote $f_\phi \colon \R^3\to \R^3$ the (wild) involution induced by $\phi$, that is, $f_\phi \circ \phi = \phi \circ R$. The purpose of the following sections is to show that, for each $p\in [1,2)$, we may choose such a monotone map $\phi$ that the involution $f_\phi$ belongs to the Sobolev space $W^{1,p}$.

\subsection{Description of the cubical properties of the configuration}

We fix now a particular cubical configuration of loops. Apart from terminology, the following conditions are as in \cite{Bing,Bing1988}.

\subsubsection{Splitting a loop}

In the construction of Bing's double, two cubical loops are embedded into a cubical loop in each stage. These new loops are properly embedded into arcs splitting the larger loop. For this reason we introduce {this} last bit of terminology.

A loop $\sL'$ is \emph{properly nested into an arc $\sA$} if 
\begin{enumerate}
\item $|\sL'| \subset \interior |\sA|$, 
\item $I$-cubes of $\sA$ contain only $I$-cubes of $\sL'$,
\item each terminal cube of $\sA$ contains exactly two corners of $\sL'$, and
\item each corner of $\sA$ contains exactly two corners of $\sL'$ and these three corners have the same symmetry plane.
\end{enumerate}

Let $\sL$ be a cubical loop. A pair $\sA^+$ and $\sA^-$ of arcs is \emph{splitting of $\sL$} if $\sA^-\cup \sA^+ = \sL$ and $\sA^-\cap \sA^+ = \Terminal(\sA^-)$; note that $\Terminal(\sA^+) = \Terminal(\sA^-)$; see Figure \ref{fig:Splitting} for an illustration. 

\begin{figure}[h!]
\begin{overpic}[scale=0.3,unit=1mm]{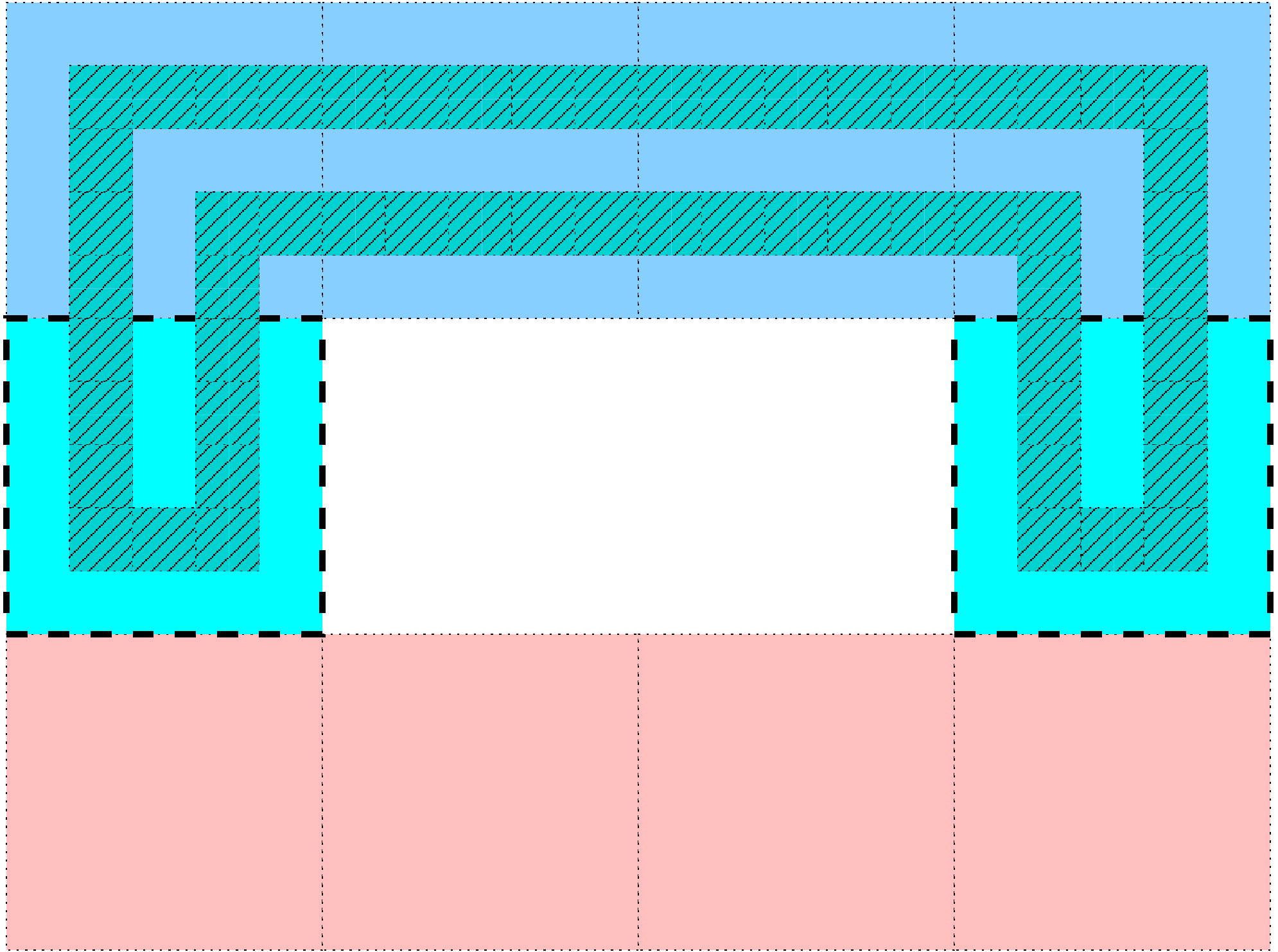} 
\end{overpic}
\caption{Splitting of loops.}
\label{fig:Splitting}
\end{figure}

\subsubsection{Cubical loops}

Let $(\sL_w)_w$ be a tree of nested loops, which are invariant under $R$, as in the initial configuration of Bing's double. Let $(r_k)$ be a positive strictly decreasing sequence tending to zero to be fixed later. For each $k$, let $r_k$ be the side length of cubes in loops $\sL_w$ for all words $w$ of length $k$. The forthcoming conditions yield a rate $r_k \lesssim 15^{-k}$ for this sequence.

For each word $w$, let $\sA^+_w$ and $\sA^-_w$ be arcs splitting $\sL_w$ and let $\sL_{w+}$ and $\sL_{w-}$ be loops properly embedded into $\sA^+_w$ and $\sA^-_w$ in such a way that the  solid $3$-tori $|\sL_{w+}|$ and $|\sL_{w-}|$ are linked in $|\sL_w|$ but not in $\R^3$. We may assume that arcs $\sA^\pm_w$ are invariant under $R$ in the sense that $\sA^\pm_w \to \sA^\pm_w$, $Q\mapsto R(Q)$, is a well-defined bijection.
We may also assume that $\sL_0$ is a model loop having two long sides and two short sides and $\# \sL_0 \ge 12$, say; see Figure \ref{fig:Bing-one-level}.

For each word $w$, let $\sL'_w$ be a loop nested in $\sL_w$ of side length $s(\sL'_w) = s(\sL_w)/3$ and satisfying $\dist(|\sL'_w|, \partial |\sL_w|) = s(\sL'_w)$. We split $\sL'_w$ into two arcs $\sA^+_w$ and $\sA^-_w$ for which the terminal cubes of $\sA^+_w$ and $\sA^-_w$ are contained in the terminal cubes of the arcs in the previous level, that is, $\sA'_{w1} \cap \sA'_{w2} \subset \sA'_{v1} \cap \sA'_{v2}$, where $v$ is the unique word satisfying either $w=v1$ or $w=v2$. We also place loops $\sL_{w+}$ and $\sL_{w-}$, contained in $|\sL_w|$, into arcs $\sA^+_w$ and $\sA^-_w$ so that they are properly nested $\sA^+_w$ and $\sA^-_w$, respectively. We also require that $s(\sL_{w\pm}) \le s(\sL'_w)/5$.

\begin{figure}[h!]
\begin{overpic}[scale=0.15,unit=1mm]{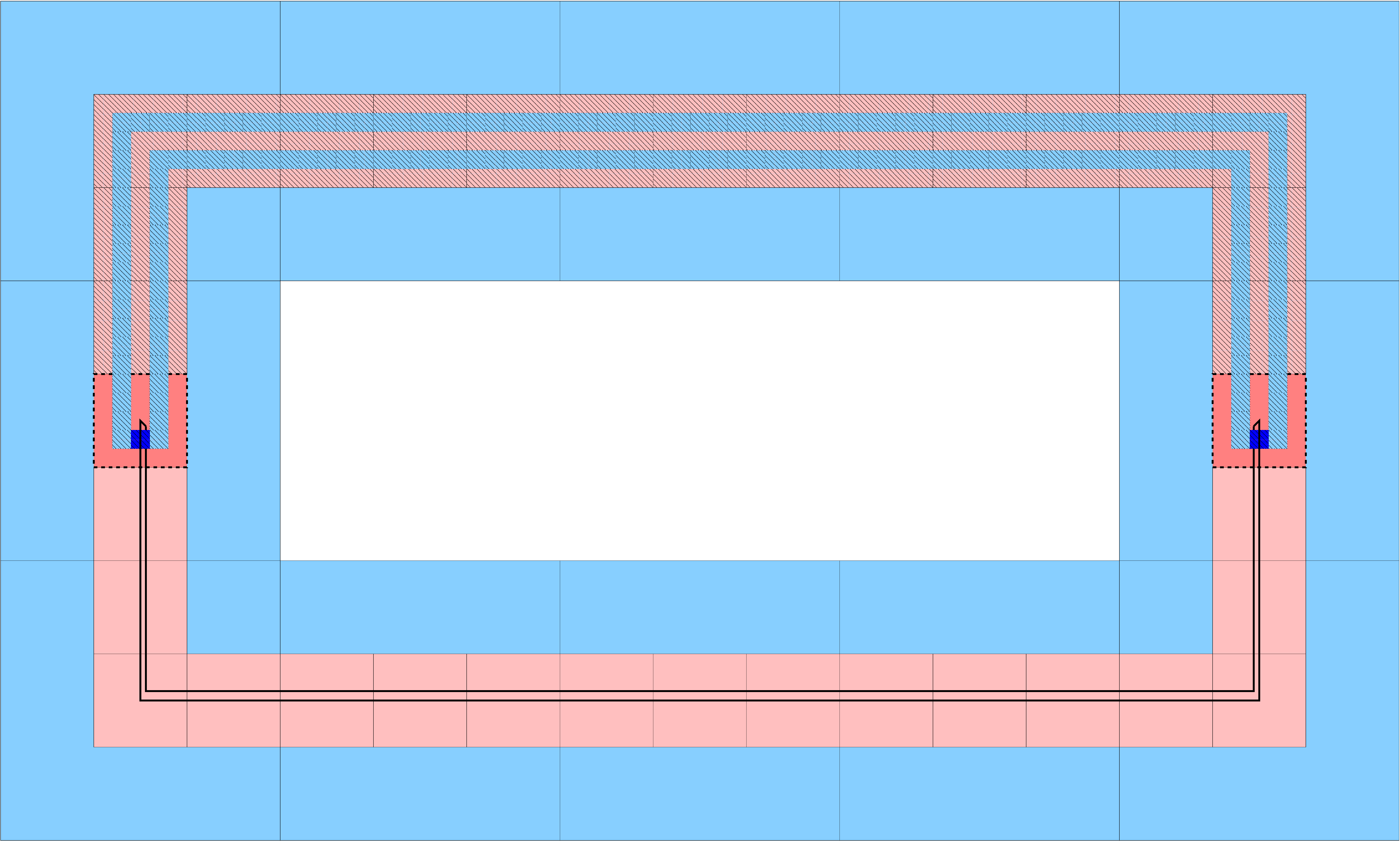} 
\put(106,50){\tiny $|\sA_+|$}
\put(106,18){\tiny $|\sA_-|$}
\end{overpic}
\caption{Nested loops $\sL'_0$ and $\sL_0$ and loops $\sL_+$ and $\sL_-$ properly embedded in arcs $\sA_+$ and $\sA_-$, respectively, splitting the loop $\sL'_0$.}
\label{fig:Bing-one-level}
\end{figure}


\begin{figure}[h!]
\begin{overpic}[scale=0.13,unit=1mm]{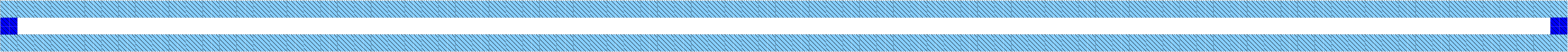} 


\end{overpic}
\caption{Model loop $\sM_+$ for $\sL_+$ for in Figure \ref{fig:Bing-one-level} drawn in a slightly smaller scale.}
\label{fig:Bing-first-model}
\end{figure}

\subsubsection{The monotone map}

The monotone map $\phi \colon \R^3\to \R^3$ is defined as follows. Let $\{ \theta_w \colon |\sL_w| \to |\sL_w|\}_w$ be a family of twist maps realizing Bing's foldings so that, for each word $w$, the restriction $\theta_w|_{|\sL'_w|}$ is a realization of a shift map $\sL'_w \to \sL'_w$. 

For each $k \in \N$, let $\theta_k \colon X_k \to X_k$ be the homeomorphism satisfying $\theta_k|_{\R^3 \setminus X_k} = \id$ and $\theta_k|_{|\sL_w|} = \theta_w$ for each $w$ of length $k$. We define  $\phi \colon \R^3 \to \R^3$ by 
\[
\phi = \lim_{k \to \infty} \theta_1 \circ \cdots \circ \theta_k.
\]
Therefore, $\phi$ is a monotone map as a uniform limit.
The wild involution $f\colon \R^3 \to \R^3$ is now the unique involution satisfying
\[
f \circ \phi = \phi \circ R.
\]

To simplify notation, we write
\[
\psi_k = \theta_1 \circ \cdots \circ \theta_k \colon \R^3 \to \R^3
\]
and 
\[
f_k = \psi_k \circ R \circ \psi_k^{-1} \colon \R^3 \to \R^3
\]
for each $k\in \N$. Then $f_k \to f$ uniformly as $k\to \infty$. Note that, locally, each $\psi_k$ is a composition of twist maps and hence $\psi_k$ is a bilipschitz homeomorphism for each $k\in \N$. 

Since also $\phi$ is locally in $\R^3\setminus X$ a finite composition of twist maps, there exists a family $(\sT_w)_w$ of nested cubical tori for which $|\sT_w| = \psi_k(|\sL_w|)$ and isomorphisms $\beta_w \colon \sL_w \to \sT_w$ for which $f_{|w|} \circ \beta_w = \beta_w \circ R$ for each word $w$. More precisely, for each $w$ there exists a cubical loop $\sT'_w$ properly nested in $\sT_w$ for which $|\sT_w|\setminus |\sT'_w| = \psi_k(|\sL_w|\setminus |\sL'_w|)$ and the loops $\sT_{w+}$ and $\sT_{w-}$ are nested in arcs $\sA^{\sT}_{w+}$ and $\sA^{\sT}_{w-}$ splitting $\sT_w$. 

\subsubsection{Local bilipschitz constant of  $f_k$ on $|\sT_w| \setminus |\sT'_w|$ for $|w|=k$}



We define the \emph{corner index $\eta(x)$ for a point $x\in \R^3$ with respect to the family $(\sT_w)_w$} by
\[
\eta(x) = \# \{ w \colon x \in \Corner(\sT_w)\} \in \N \cup \{\infty\}.
\]

Let now $x\in |\sT_w| \setminus |\sT'_w|$. Since $\psi_{k-1}$ is composition of twist maps, we conclude that its  local Lipschitz constant at $x$ depends only on the number of corners $\eta(x)$. More precisely, by the proof of Lemma \ref{lemma:inner_bilip}, we have $\Lip(\psi_{k-1}) \le L_{\inner}^{\eta(x)}$, where $L_{\inner}$ is universal. On the other hand, by Lemma \ref{lemmatarkea}, $\Lip(\theta_k \circ \R \circ \theta_k^{-1})  \le L_\Diamond \cdot 3 \cdot \# \sT_w$. Thus
\begin{equation}\label{annanumero}
\Lip f_k  = \Lip   (\psi_{k-1} \circ (\theta_k   \circ R \circ \theta_k^{-1}) \circ \psi_{k-1} ^{-1})     \le L_1 L_{\inner}^{2\eta(x)}/r_{\ell(x)}
\end{equation}
near $x$ where $L_1$ is universal.




\section{Proof of Theorem \ref{thm:main}}

Let $p\in [1,2)$. It remains to show that we may choose side lengths of cubes in the loop $\sL_w$ in such a way that the reflection $f \colon \R^3 \to \R^3$ induced by the reflection $R$ is in the Sobolev space $W^{1,p}(\R^3,\R^3)$. 

For each $k\in \N$, let 
\[
m_k = \frac{k}{2-p}
\]
and, for each word $w$ of length $k$, we set the cube size $r_k$ of $\sL_w$ to be 
\[
r_k = \min\{ 3^{-m_k}, 15^{-k}, (10L_2^p)^{-k}\}.
\]

Let now the monotone map $\phi \colon \R^3 \to \R^3$ and the involution $f\colon \R^3\to \R^3$ be as in the previous section.


By~\eqref{annanumero} construction, the involution $f\colon \R^3\to \R^3$ is locally $L(x)$-bilipschitz for $x\in \R^3\setminus \phi(X)$, where 
\[
L(x) = L_1 L_{\inner}^{2\eta(x)}/r_{\ell(x)}.
\]
Hence
\[
|Df(x)|\le L(x)
\]
for almost every $x\in \R^3\setminus \phi(X)$.

By \cite{Bing1988}, the number of corners of $\sT_w$ is at most double the corners of $\sL_w$ and the cubical loop $\sL_w$ has $4^{|w|}$ corners. Thus, for each $k\in \N$, we have also the estimate that
\[
|\{ x\in X_k \colon \eta(x)>0\}| \lesssim 4^{k+1} r_k^3 \le 4^{k+1} (10L_{\inner}^p)^{-k} \le (2L_{\inner}^p)^{-k} 
\]
for the Lebesgue measure of the set of those points of $X_k$ which are contained in the corners on some previous level. We conclude that
\[
\int_{\{ x\in X_k\setminus X_{k+1} \colon \eta(x)>0\}} |Df|^p \le (L_{\inner}^k)^p (2L_{\inner}^p)^{-k} = 2^{-k}.
\]
We estimate now the $L^p$-norm of $|Df|$ on the sets \[Y_k = \{ x\in X_k\setminus X_{k+1} \colon \eta(x)=0\} \, . \] Since $X_k$ is a pair-wise disjoint union of $2^k$ loops consisting at most $2/r_k$ cubes of side length $r_k^3$, we conclude that the Lebesgue measure $|X_k|$ of $X_k$ satisfies
\[
|X_k| \le 2^k (2/r_k)r_k^3 = 2^{k+1} r_k^2.
\]
Thus
\begin{eqnarray*}
\int_{Y_k} |Df|^p \dx \le (L_1/r_k)^p \vol(X_k) 
\le L_1 2^{k+1} r_k^{2-p} \le L_1 2^{k+1} 3^{-k} = 2L_1 (2/3)^k.
\end{eqnarray*}
We conclude that $|Df|$ is $L^p$-integrable on $X_0\setminus \phi(X)$. Since the Cantor set $\phi(X)$ has Lebesgue $3$-measure zero, we have that $|Df|$ is in $L^p$.



It remains now to verify that $f$ is weakly differentiable. Let $f_k = \psi_k^{-1} \circ R \circ \psi_k$. Then $f_k$ is Lipschitz, by the previous calculation, $(f_k)$ is a Cauchy  sequence in $W^{1,p}(\R^3;\R^3)$. Furthermore, since  the sequence $(f_k)$ converges to $f$ locally uniformly, we conclude that $f\in W^{1,p}(\R^3,\R^3)$. This completes the proof.

\bibliographystyle{abbrv}

\end{document}